\theoremstyle{plain}
 \newtheorem{thm}{Theorem}[section]
 \newtheorem{prop}[thm]{Proposition}
 \newtheorem{lem}[thm]{Lemma}
 \newtheorem{conj}[thm]{Conjecture}
 \newtheorem*{quest*}{Question}
 \newtheorem*{IMC*}{Iwasawa Main Conjecture}
\theoremstyle{definition}
 \newtheorem{dfn}[thm]{Definition}
 \newtheorem*{dfn*}{Definition}
 \newtheorem*{hyp*}{Hypothesis}
 \newtheorem*{hyps*}{Hypotheses}
\theoremstyle{remark}
 \newtheorem*{rem*}{Remark}
 \newtheorem*{warn*}{Warning}
 \newtheorem*{rems*}{Remarks}
 \newtheorem*{not*}{Notation}
 \newtheorem*{nots*}{Notations}
 \newtheorem*{convs*}{Conventions}
 \newtheorem*{exm*}{Example}
 \newtheorem*{pf*}{Proof}
 \numberwithin{equation}{section}
\renewcommand{\leq}{\leqslant}
\renewcommand{\geq}{\geqslant}
\newcommand{\QQ}{\mathbb{Q}}
\newcommand{\Cp}{\mathbb{C}_p}
\newcommand{\Zp}{\mathbb{Z}_p}
\newcommand{\cond}{\text{\rm cond}}
\newcommand{\ord}{\text{\rm ord}}
\definecolor{Green}{rgb}{0.0, 0.5, 0.0}
\definecolor{bleu}{rgb}{0.0, 0.0, 0.75}
\title[On Iwasawa $\lambda$-invariants for abelian number fields]{On Iwasawa $\lambda$-invariants for abelian number fields and random matrix heuristics}
\subjclass[2010]{Primary 11R23; Secondary 11R42, 11S80, 11M41}
\keywords{Iwasawa theory, $p$-adic $L$-functions, random matrices}
\author[Daniel Delbourgo]{\bfseries Daniel Delbourgo}
\address{Department of Mathematics and Statistics \\
University of Waikato \\ Gate 8 \\ Hillcrest Road \\ Hamilton \\ New Zealand\\ 3240}
\email{daniel.delbourgo@waikato.ac.nz}
\author[Heiko Knospe]{\bfseries Heiko Knospe}
\address{Technische Hochschule K\"{o}ln \\ Fakult\"{a}t f\"{u}r Informations-, Medien- und Elektrotechnik \\ Institut f\"{u}r Nachrichtentechnik \\ Betzdorfer Str. 2 \\ 50679 K\"{o}ln \\ Germany}
\email{heiko.knospe@th-koeln.de}
\begin{document}

{\begin{flushleft}\baselineskip9pt\scriptsize\end{flushleft}}
\setcounter{page}{1} \thispagestyle{empty}

\bigskip\bigskip
\begin{abstract}
Following both Ernvall-Mets\"{a}nkyl\"{a} and Ellenberg-Jain-Venkatesh, we study the density of the number of zeroes (i.e. the cyclotomic $\lambda$-invariant)
for the $p$-adic zeta-function twisted by a Dirichlet character $\chi$ of any order. We are interested in two cases: (i) the character $\chi$ is fixed and the prime $p$ varies, and (ii)
$\ord(\chi)$ and the prime $p$ are both fixed but $\chi$ is allowed to vary.
We predict distributions for these $\lambda$-invariants using $p$-adic random matrix theory and provide numerical evidence for these predictions.
We also study the proportion of $\chi$-regular primes, which depends on how $p$ splits inside $\QQ(\chi)$.
Finally in an extensive Appendix, we tabulate the values of the $\lambda$-invariant for every character $\chi$ of conductor $\leq 1000$ and for odd primes $p$ of small size.
\end{abstract}

\maketitle

%\vspace{-6mm}
\tableofcontents
%\vspace{-8mm}

\newpage
%%%%%%%%%%%%%%%%%%%%%%%%%%%%%%%%%%%%%
\section{Introduction and three conjectures}\label{Sect1}
\noindent
Let $p$ be an odd prime, and suppose that $\chi:\mathbb{Z}\rightarrow\mathbb{C}$ denotes a Dirichlet character.
The Kubota-Leopoldt $L$-function $\mathbf{L}_p(\chi,s)$ is a meromorphic function \cite{KuLe} of the $p$-adic variable $s$, which interpolates
values of the $\chi$-twisted Riemann zeta-function along the critical strip of non-positive integers. More precisely, one has the formula
\begin{equation}\label{InterpFormula}
\mathbf{L}_p(1-n, \chi) \;=\; - \big( 1 -\chi \omega^{-n} (p) p^{n-1} \big) \times \frac{B_{n, \chi \omega^{-n} }}{n}
\quad\text{at every $n\in\mathbb{N}$;}
\end{equation}
here $\omega$ indicates the Teichm\"{u}ller character mod $p$, and $B_{n, \chi \omega^{-n}}$
equals the $n$-th $\chi \omega^{-n}$-twisted Bernoulli number. If $\chi$ is non-trivial then
the $p$-adic zeta-function is analytic in $s$, whilst if $\chi=\mathbf{1}$ then it has a simple pole at $s=1$ with residue $1-\frac{1}{p}$.

Now suppose that $\chi$ is a non-trivial character of the first kind, namely that $\chi = \theta \omega^i$
where $p \nmid \cond(\theta)$ and $i \in \{0,\dots, p-2\}$. We will also set $\mathcal{O}_{\chi}:=\Zp[\text{\rm Im}(\chi)]$.
Iwasawa associated a unique power series $\mathcal{G}_{\chi}(T) = \sum_{j=0}^{\infty} c_j T^j \in \mathcal{O}_{\chi}[\![T]\!]$ satisfying
$$
\mathcal{G}_{\chi}\big( (1+ p)^{-s} - 1\big) =\;
\mathbf{L}_p(s, \chi)
\quad\text{for all $s\in\widehat{\overline{\mathbb{Q}}}_p$ with $\big|s\big|_p\! <p^{\frac{p-2}{p-1}}$.}
$$
The $\mu$-invariant of $\mathcal{G}_{\chi}$ is known to vanish by the Ferrero-Washington theorem \cite{FeWa} in which case this function factorises into a product of an invertible power series,
and a uniquely determined {\em distinguished polynomial}
$$
T^{\lambda} + a_{\lambda -1} T^{\lambda -1 } + \dots + a_1 T + a_0 \in \mathcal{O}_{\chi}[T]
\quad\text{with $\;\big|a_{\lambda-1}\big|_p, \;\dots\;, \big|a_0\big|_p <\; 1$.}
$$
Its degree $\lambda=\lambda_p(\chi)$ equals the number of zeroes of $\mathcal{G}_{\chi}(T)$ on the open unit disk.

\begin{conj}\label{conj1} For even characters $\chi$ of the first kind such that $p \nmid \ord(\chi)$ and $\chi \omega^{-1} (p) \neq 1$, the probability that $\lambda_p(\chi)=r$
is approximately equal to
\begin{equation}
p^{-fr}\times\;\prod_{t>r}\;\big(1-p^{-ft}\big)  \label{pred-prob}
\end{equation}
where $f\geq 1$ denotes the multiplicative order of the prime $p$ modulo $\ord(\chi)$.
\end{conj}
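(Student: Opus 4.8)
The statement is a heuristic prediction rather than a theorem, so the plan is to \emph{derive} the probability \eqref{pred-prob} from a $p$-adic random matrix model in the spirit of Ellenberg--Jain--Venkatesh; the work lies in checking that the model outputs exactly the displayed product, whereas the genuine obstacle is the justification of the modelling hypothesis itself. First I would localise. Because $p\nmid\ord(\chi)$, the prime $p$ is unramified in $\QQ(\chi)=\QQ(\zeta_{\ord(\chi)})$, so $\cO_\chi$ is the unramified discrete valuation ring with residue field $\mathbb{F}_q$, where $q=p^{f}$ and $f$ is the order of $p$ modulo $\ord(\chi)$; this is the source of the exponent $f$. The Ferrero--Washington vanishing of $\mu$ \cite{FeWa}, together with Weierstrass preparation, identifies $\lambda_p(\chi)$ with the number of zeroes of $\mathcal{G}_\chi(T)$ in the open unit disc, equivalently with the order of vanishing at $T=0$ of the reduction $\overline{\mathcal{G}_\chi}(T)\in\mathbb{F}_q[[T]]$.

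Next I would impose the model: I replace $\mathcal{G}_\chi(T)$, up to a unit, by the characteristic polynomial $\det(T\cdot I-M)$ of a Haar-random matrix $M\in M_n(\cO_\chi)$ in the limit $n\to\infty$. Under this hypothesis $\lambda_p(\chi)$ becomes the number of eigenvalues of $M$ in the open disc, which by the theory of Newton polygons equals the algebraic multiplicity of the eigenvalue $0$ of the reduced matrix $\overline{M}\in M_n(\mathbb{F}_q)$, i.e. the $\mathbb{F}_q$-dimension of the generalized $0$-eigenspace $\ker(\overline{M}^{\,n})$. Here the hypothesis $\chi\omega^{-1}(p)\neq1$ is exactly what prevents the interpolation factor in \eqref{InterpFormula} from forcing a trivial zero at $T=0$ (one checks $\mathcal{G}_\chi(0)=-(1-\chi\omega^{-1}(p))\,B_{1,\chi\omega^{-1}}$), so that no eigenvalue $0$ is built into the model and the unconditioned distribution applies. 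By the theory of random matrices over finite fields (work of Fulman, extending Stong and Kung) on the primary decomposition of a uniform matrix, the Jordan type $\mu$ at the fixed eigenvalue $0$ converges, as $n\to\infty$, to the Cohen--Lenstra measure on finite $\mathbb{F}_q[[T]]$-modules, namely $P(\mu)=\big(\prod_{i\ge1}(1-q^{-i})\big)\,|\mathrm{Aut}(\mu)|^{-1}$, where $\mathrm{Aut}(\mu)$ is the automorphism group of the module of type $\mu$.

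It then remains to extract the law of the size $|\mu|=\lambda_p(\chi)$. Combining the generating identity $\sum_{\mu}|\mathrm{Aut}(\mu)|^{-1}u^{|\mu|}=\prod_{k\ge1}(1-uq^{-k})^{-1}$ with Euler's $q$-expansion $\prod_{k\ge1}(1-uq^{-k})^{-1}=\sum_{r\ge0}q^{-r}\big(\prod_{k=1}^{r}(1-q^{-k})\big)^{-1}u^{r}$ yields
\[
P\big(\lambda_p(\chi)=r\big)=\Big(\prod_{i\ge1}(1-q^{-i})\Big)\cdot\frac{q^{-r}}{\prod_{k=1}^{r}(1-q^{-k})}=q^{-r}\prod_{t>r}\big(1-q^{-t}\big),
\]
which is \eqref{pred-prob} after substituting $q=p^{f}$. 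That these probabilities sum to $1$ is the routine telescoping $q^{-r}\prod_{t>r}(1-q^{-t})=\prod_{t\ge r+1}(1-q^{-t})-\prod_{t\ge r}(1-q^{-t})$, whose partial sums collapse to $1$ since $\prod_{t\ge0}(1-q^{-t})=0$.

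The main obstacle is none of these computations but rather the modelling step: there is no theorem guaranteeing that the arithmetically defined series $\mathcal{G}_\chi(T)$ equidistributes like a random characteristic polynomial as $\chi$ (or $p$) varies, and indeed the correlations among the coefficients $c_j$ forced by the interpolation property \eqref{InterpFormula} are precisely what a rigorous argument would have to control. For this reason the prediction can only be promoted to a conjecture and corroborated numerically, which is the purpose of the tables in the Appendix.
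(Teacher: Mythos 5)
Your proposal is correct as a derivation of the heuristic, and it follows the same overall strategy as the paper (the Iwasawa Main Conjecture plus the Ellenberg--Jain--Venkatesh random matrix model), but the key computation --- the paper's Theorem~\ref{DistThm}, which produces the value $\rho(q,r)=q^{-r}\prod_{t>r}(1-q^{-t})$ --- is carried out by a genuinely different route. The paper reduces mod $\varpi$, fixes a uniform $A\in\text{\rm GL}(n,\mathbb{F}_q)$, and directly enumerates the matrices whose unipotent Fitting block has dimension $r$ as a product of three explicit counts (unipotent $r\times r$ matrices, matrices avoiding the eigenvalues $0$ and $1$ via the Friedman--Washington formula \cite{FrWa,Wa}, and complementary subspace decompositions), then passes to the limit $n\to\infty$ using the alternating-series identity from \cite{Wa}. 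You instead invoke the limiting Cohen--Lenstra measure $P(\mu)=\bigl(\prod_{i\geq1}(1-q^{-i})\bigr)\lvert\mathrm{Aut}(\mu)\rvert^{-1}$ on the Jordan type at a fixed eigenvalue and extract the law of $\lvert\mu\rvert$ from the generating identity $\sum_\mu \lvert\mathrm{Aut}(\mu)\rvert^{-1}u^{\lvert\mu\rvert}=\prod_{k\geq1}(1-uq^{-k})^{-1}$ together with Euler's $q$-binomial expansion; I checked the resulting product telescopes correctly to \eqref{pred-prob}. Your approach is conceptually cleaner and makes the Cohen--Lenstra structure visible, at the cost of importing Fulman's limit theorem as a black box, whereas the paper's count is elementary and self-contained (and also yields the finite-$n$ correction term). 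One minor discrepancy worth noting: you take $M$ Haar-random in the full matrix ring and study the eigenvalue $0$, while the paper --- motivated by the fact that $\gamma_0$ acts invertibly on $W^{(\psi)}$ --- takes $A$ uniform in $\text{\rm GL}(n,\cO)$ (or $\text{\rm GSp}(2n,\cO)$) and studies eigenvalues near $1$; the two models have the same limiting Jordan-type distribution at the relevant eigenvalue, so the prediction is unaffected, but the paper's choice is the one dictated by the arithmetic. Your closing caveat, that the only real gap is the unprovable modelling hypothesis itself, matches the paper's framing of the statement as a conjecture supported by numerics.
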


This probability is computed either (i) over primes $p$ and Teichm\"uller twists of a fixed character $\chi$, or (ii) over characters $\chi$ of fixed order $\geq 2$ at a fixed prime $p$.
The condition $p \nmid \ord(\chi)$ ensures that $\mathcal{O}_{\chi}$ is a finite and unramified extension of $\Zp$.
Moreover we have $\mathcal{O}_{\chi} =\Zp\big[\text{\rm Im}(\chi)\big]= \Zp[\zeta_{\ord(\chi)}]$, hence the residue class degree of $\mathcal{O}_{\chi}$ over $\Zp$
will be the smallest positive integer $f$ such that $p^f \equiv 1 \;\big(\!\!\!\!\mod \ord(\chi)\big)$.

\begin{warn*}
If $\chi\omega^{-1}(p)=1$ then $\mathbf{L}_p(s,\chi)$  has a trivial zero at $s=0$, in which case $\mathcal{G}_{\chi}(T)$ has a trivial zero at $T=0$. This happens
precisely  if $\chi = \theta \omega$ where $\theta$ is odd of conductor prime to $p$ with $\theta(p)=1$.
In this situation it is automatically true that $\lambda_p(\chi) \geq 1$,
so the {\it corrected $\lambda$-invariant} is given by $\lambda_p^{\text{corr}}(\chi) = \lambda_p(\chi) -1$.
\end{warn*}

Conjecture \ref{conj1} generalizes a prediction of Ellenberg, Jain and Venkatesh in \cite{EJV}. In their work they also provided numerical evidence for $p \in \{3, 5\} $ and quadratic characters
$\chi$ where $p$ does not split, i.e. at which there is no trivial zero (in the split case they subtracted $1$ from the $\lambda$-invariant). Furthermore, the
first author and Chao \cite{DeQi} studied the $\lambda$-invariant for cubic characters $\chi$ with the prime $p \in \{5, 7\}$,
and found distributions for $\lambda_p(\chi)$ consistent with the quantities predicted in (\ref{pred-prob}).

As we discuss later in Section \ref{Sect2}, following Ellenberg et al. \cite{ChHu,EJV} we employ a $p$-adic random matrix model for the groups $\text{\rm GL}(n,\mathcal{O}_{\chi})$ to formulate
this conjecture. In Section \ref{Sect3} we provide extensive numerical evidence supporting this conjecture at lots of primes and characters: for any given prime $p$ and fixed order, we test all
the characters $\chi$ of conductor $<10000$ and their relevant  Teichm\"{u}ller twists $\chi\omega^i$.
The probability $\lambda_p(\chi)>0$ decreases sharply when the residue class degree $f$ of $\mathcal{O}_{\chi}$ increases. For example, if $p\geq 11$ and $f\geq 2$ then $\text{\rm Prob}(\lambda_p(\chi)>0) \leq 0.01$, and if
$p \geq 101$ and $f \geq 3$ then $\text{\rm Prob}(\lambda_p(\chi) >0) \leq 10^{-6}$.
It follows that the interesting cases are at small primes $p$ and for small orders of $\chi$, which is depicted in Figure \ref{lambdaprob} below.

\vspace{-10mm}

\begin{figure}[H]
  \includegraphics[width=\linewidth]{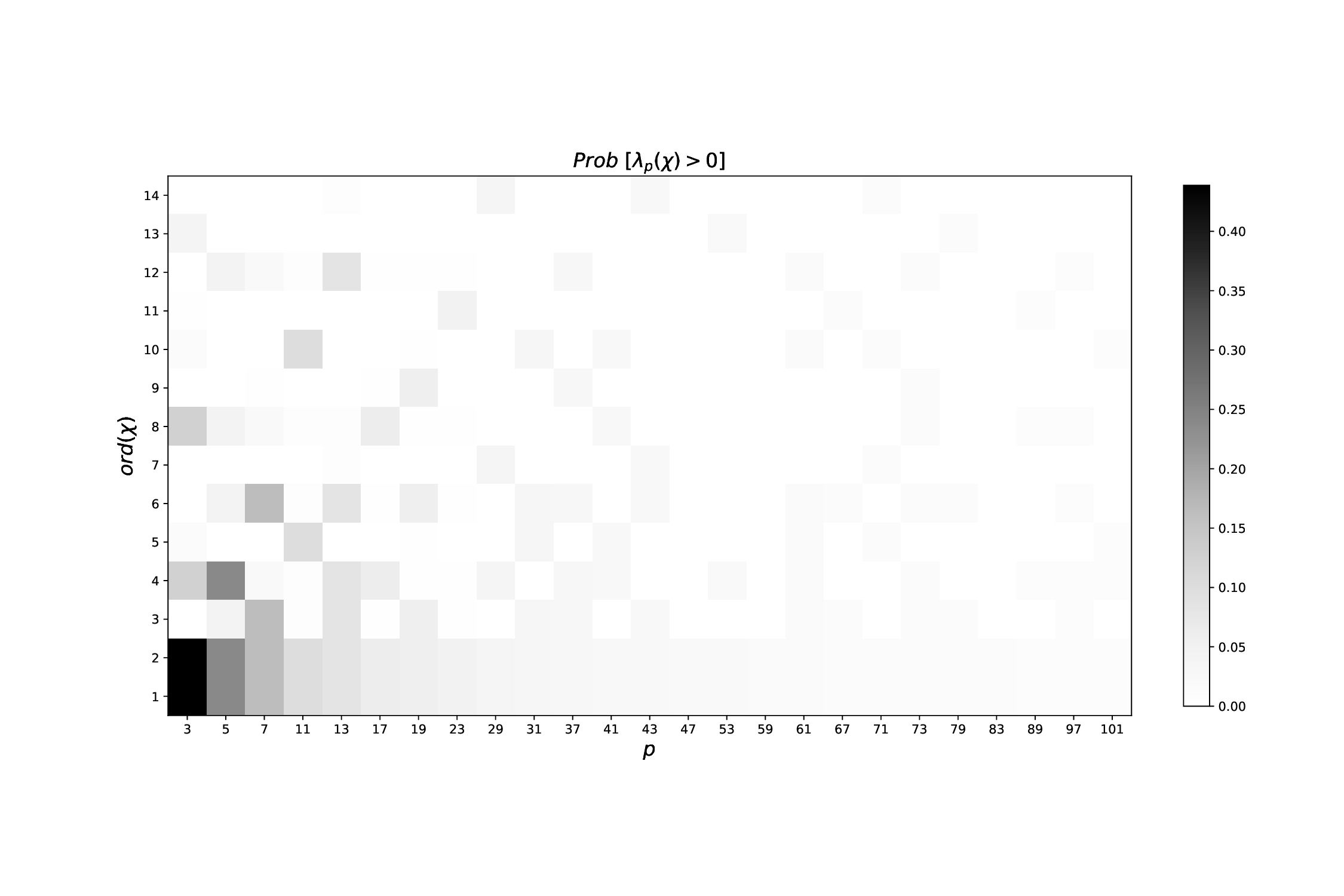}
  \caption{Predicted probabilities that $\lambda_p(\chi)>0$ for primes $p$ and characters $\chi$, under the assumption that $p$ is coprime to $\ord(\chi)$.}
  \label{lambdaprob}
\end{figure}

\vspace{-4mm}

Throughout a prime $p$ is called {\it $\chi$-regular} if $\lambda_p(\chi\omega^{i})=0$ at all $i\in\{0,\dots,p-2\}$ such that $\chi \omega^i$ is even, unless $\chi \omega^{i}(p)=1$
in which case we use $\lambda_p^{\text{\rm corr}}(\chi\omega^{i})=0$ instead.
From the interpolation rule in Equation (\ref{InterpFormula}) for even $\chi$, a prime $p$ is $\chi$-regular if and only if
$p$ does not divide the numerators of the numbers $B_{2, \chi},B_{4, \chi},\dots, B_{p-1, \chi}$.
Assuming Conjecture \ref{conj1} holds for a character $\chi$, we derive an asymptotic formula (Theorem \ref{TotThm}) that
allows us to estimate the exact proportion of  $\chi$-regular primes.

\begin{conj}\label{conj2}
For a fixed character $\chi$, the proportion of $\chi$-regular primes is equal to
$1+\frac{e^{-\frac{1}{2}}-1}{\varphi(\ord(\chi))}$,
while the proportion of $\chi$-irregular primes equals
$\frac{1-e^{-\frac{1}{2}}}{\varphi(\ord(\chi))}$.
\end{conj}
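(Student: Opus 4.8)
The plan is to combine the per-prime regularity probability furnished by Theorem~\ref{TotThm} with an asymptotic analysis in $p$ and Dirichlet's theorem on primes in arithmetic progressions. Set $d=\ord(\chi)$ and discard the finitely many primes dividing $d$. By definition $p$ is $\chi$-regular precisely when every even twist $\chi\omega^i$ with $i\in\{0,\dots,p-2\}$ has vanishing invariant, using $\lambda_p^{\mathrm{corr}}$ at the (at most one) twist carrying a trivial zero. Treating the random matrices attached to distinct twists as independent---this is the random-matrix hypothesis underlying Conjecture~\ref{conj1}---the probability factors as
$$
\mathrm{Prob}(p\ \chi\text{-regular})\;=\;\prod_{\substack{0\le i\le p-2\\ \chi\omega^i\ \text{even}}}\ \prod_{t\ge 1}\big(1-p^{-f_i t}\big),
$$
where $f_i$ is the residue class degree of $\mathcal{O}_{\chi\omega^i}$ over $\Zp$, i.e. the order of $p$ modulo $\ord(\chi\omega^i)$. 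There are exactly $\tfrac{p-1}{2}$ even twists, up to the bounded set of exceptional characters (trivial-zero and, for the principal component, pole cases), which will not affect any limit.

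The first key step is a dichotomy for the degrees $f_i$. Since $\omega^i$ always has order dividing $p-1$, the product $\chi\omega^i$ has order dividing $p-1$ if and only if $d\mid(p-1)$: indeed $\chi=(\chi\omega^i)\omega^{-i}$, so $\ord(\chi\omega^i)\mid(p-1)$ for even a single $i$ already forces $d\mid(p-1)$. Consequently, if $p\equiv 1\pmod d$ then $\ord(\chi\omega^i)\mid(p-1)$ and hence $f_i=1$ for \emph{every} even twist; whereas if $p\not\equiv 1\pmod d$ then $\ord(\chi\omega^i)\nmid(p-1)$ and hence $f_i\ge 2$ for every even twist.

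Next I would evaluate the product asymptotically in each case. When $p\equiv 1\pmod d$ every factor equals $\prod_{t\ge1}(1-p^{-t})=1-p^{-1}+O(p^{-2})$, so
$$
\log\mathrm{Prob}(p\ \chi\text{-regular})\;=\;\tfrac{p-1}{2}\,\log\!\big(1-p^{-1}+O(p^{-2})\big)\;=\;-\tfrac12+O(p^{-1}),
$$
whence the regularity probability tends to $e^{-1/2}$; this recovers the classical density $1-e^{-1/2}$ of irregular primes in the case $d=1$. When $p\not\equiv1\pmod d$ each factor is $1-O(p^{-2})$ with at most $\tfrac{p-1}{2}$ of them, so the whole product is $1-O(p^{-1})\to 1$. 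Finally I aggregate over primes: by Dirichlet's theorem the primes with $p\equiv1\pmod d$ have density $1/\varphi(d)$, so the proportion of $\chi$-regular primes equals
$$
\frac{1}{\varphi(d)}\cdot e^{-1/2}+\Big(1-\frac{1}{\varphi(d)}\Big)\cdot 1\;=\;1+\frac{e^{-1/2}-1}{\varphi(d)},
$$
and the complementary proportion of $\chi$-irregular primes is $\frac{1-e^{-1/2}}{\varphi(d)}$, as claimed.

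The main obstacle is conceptual as much as technical: one must justify converting the per-prime heuristic probabilities of Conjecture~\ref{conj1} into a genuine limiting frequency over primes, which tacitly models $\chi$-regularity at distinct primes as an independent sequence of Bernoulli trials governed by the random-matrix predictions. On the technical side, the delicate points are the uniform control of the $O(p^{-2})$ error across all $\tfrac{p-1}{2}$ even twists---so that the limit is exactly $e^{-1/2}$ rather than merely bounded away from $0$ and $1$---and the clean bookkeeping for the bounded set of exceptional twists, where $\lambda_p^{\mathrm{corr}}$ replaces $\lambda_p$, none of which perturb the limits but all of which must be excised from the product.
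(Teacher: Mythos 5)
Your proposal is correct and follows essentially the same route as the paper: the authors likewise take the per-twist prediction $\rho(p^f,0)$ from Conjecture~\ref{conj1}, assume independence over the $\tfrac{p-1}{2}$ even Teichm\"uller twists (all of which share the same residue degree $f$, with $f=1$ exactly when $p\equiv 1\ (\mathrm{mod}\ \ord(\chi))$), show $\rho(p,0)^{(p-1)/2}\to e^{-1/2}$ while $\rho(p^f,0)^{(p-1)/2}\to 1$ for $f\ge 2$, and average over the two classes of primes with weights $1/\varphi(\ord(\chi))$ and $1-1/\varphi(\ord(\chi))$. The only cosmetic difference is technical: you use a logarithmic expansion where the paper sandwiches $\rho(p^f,0)^{(p-1)/2}$ between $\beta(p^f)^{(p-1)/2}\prod_{t\ge 2}(1-p^{-tf})$ and $\beta(p^f)^{(p-1)/2}\prod_{t\ge 3}(1-p^{-tf})$ and sums over $p\le X$ via Euler's pentagonal formula (Theorem~\ref{TotThm}).
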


Of course, if $\chi$ is trivial then the predicted proportion will be $e^{-\frac{1}{2}}\approx 0.6065$ which coincides with the
standard predicted proportion of regular primes over $\QQ$.
However, note that our derivation is based on Conjecture \ref{conj1}, which refines the usual heuristics that the
Bernoulli numbers $\big\{B_i\big\}_{i\in2\mathbb{N}}$ are uniformly random modulo $p$.
The relationship between $\chi$-regularity and $\ord(\chi)$ is represented in Figure \ref{propregular} below,
and this prediction is reflected by the numerical data that we collect in Section \ref{Sect3}.

\vspace{-6mm}

\begin{figure}[H]
  \includegraphics[width=\linewidth]{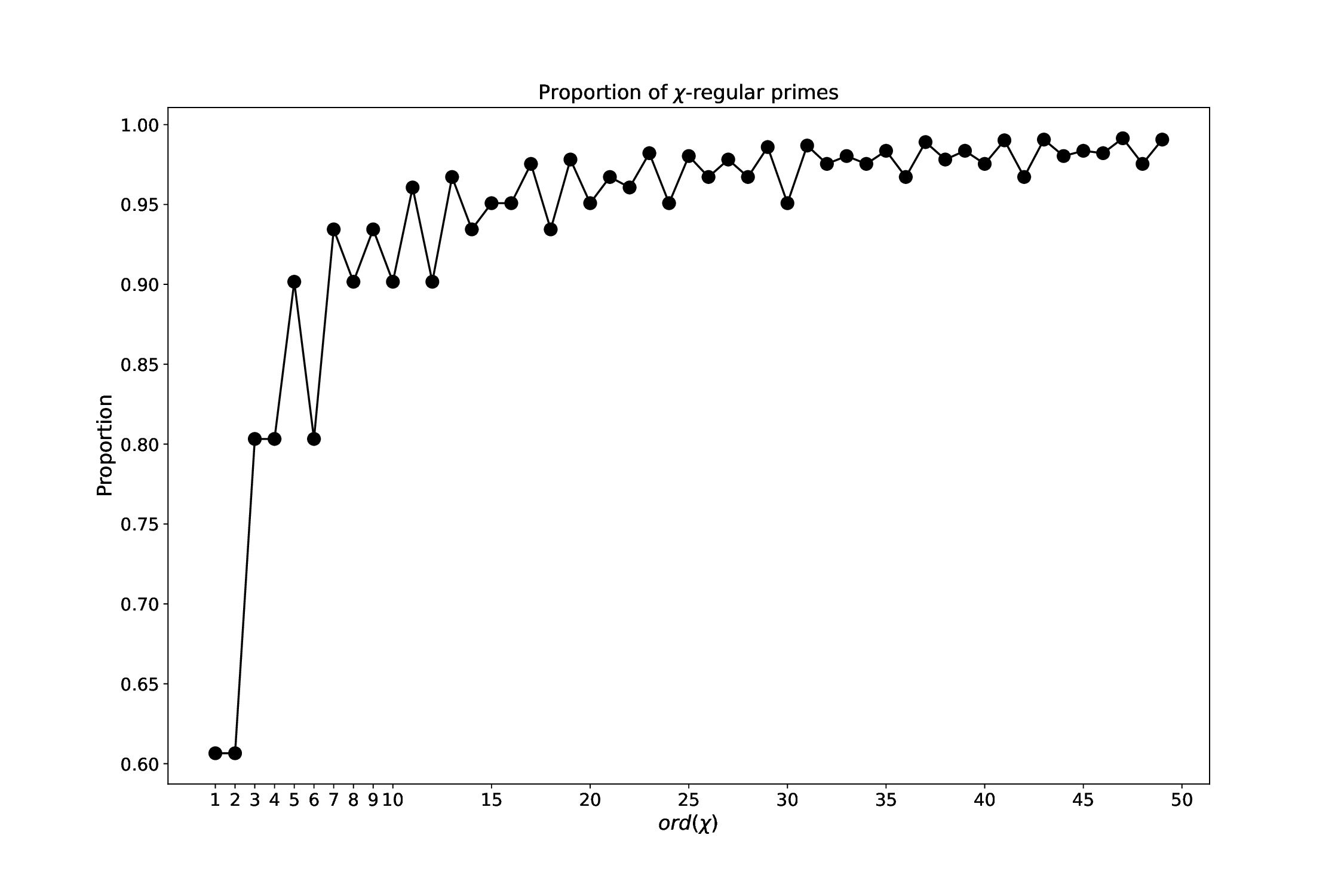}
  \caption{The predicted proportion of $\chi$-regular primes versus $\ord(\chi)$.}
  \label{propregular}
\end{figure}

\vspace{-4mm}

Let $F$ be a totally real abelian extension of $\QQ$ such that $p$ is unramified in $F/\QQ$. One says that $p$ is {\em $F$-regular} if it is $\chi$-regular for
every character $\chi$ of $\text{Gal}(F/\QQ)$.
Applying the analytic class number formula and the construction of $p$-adic $L$-functions,
$p$ is $F$-regular iff $p$ does not divide the minus part of the class numbers of $F(\mu_{p^n})$ for all $n\in \mathbb{N}$ (see \cite[Theorems 4.17 and 7.13]{Wa2}).

\begin{conj}\label{conj3} Fix a prime $p\gg 0$ and an abelian group $G$ of order $m$, isomorphic to a sum of cyclic groups of order $m_i$ with $i\in\{1,\dots, n\}$.
For totally real abelian extensions $F/\mathbb{Q}$ with Galois group $G$ and $p\nmid m\cdot\text{\rm disc}_F$, one estimates that
$$
\text{\rm Prob}\big(\text{\rm $p$ is $F$-regular}\big) \;\approx\; \exp\left(\!-\frac{\prod_{i=1}^n \gcd(m_i , p-1)}{2} \right) .
$$
\end{conj}

Recall that a character $\chi$ takes values inside $\QQ(\chi)=\QQ\big(\text{\rm Im}(\chi)\big)=\QQ(\mu_{\ord(\chi)})$.
We multiply the probabilities that $\chi$ is regular over all characters $\chi$ of $G$. 
For each cyclic component of order $m_i$, there are $\gcd(m_i,p-1)$ characters $\chi_i$ such that $p \equiv 1 \;\!\!\!\!\ \mod  \ord(\chi_i)$.
For these characters, the probability of being regular is approximately $e^{-1/2}$, while for the other characters, the probability is close to $1$. 
So we obtain $\prod_{i=1}^n \gcd(m_i , p-1)$ factors $e^{-1/2}$.

For $F=\QQ$, we recover the usual conjecture on the proportion of regular primes.
For real quadratic fields $F$ with non-trivial character $\chi$, a prime $p$ is $F$-regular iff $\lambda_p(\omega^ i) = 0$ and
$\lambda_p(\chi \omega^ i) = 0$ for all $i\in\{0,2,\dots,p-3\}$: the predicted probability is $(e^{-1/2})^2 = e^{-1}$.
For cyclic cubic fields $F$ with non-trivial character $\chi$, a prime $p$ is $F$-regular precisely when $\lambda_p(\chi^j \omega^ i) = 0$ for all $i\in\{0,2,\dots,p-3\}$ and $j=0,1,2$:
if $3\;\big|\; p-1$ then one predicts a probability of $e^{-3/2}$, otherwise one predicts $e^{-1/2}$.

%%%%%%%%%%%%%%%%%%%%%%%%%%%%%%%%%%%%%
\section{Estimates from $p$-adic random matrix theory}\label{Sect2}

\noindent
To study the behaviour of $\lambda_p(\chi)$, we utilise the Iwasawa Main Conjecture to relate this invariant to the action of a certain operator `$\gamma_0$' on an arithmetic module $W$.
Let $F$ be a totally real field, and assume that the prime $p\neq 2$ is unramified in $F$.
We write $F_{\infty}$ for the cyclotomic $\mathbb{Z}_p$-extension of $F$, so that $\Gamma
:=\text{\rm Gal}(F_{\infty}/F)\cong\mathbb{Z}_p$. Moreover
if $\mu_{p^{\infty}}\!\subset\overline{F}$ is the group of $p$-power roots, then
$\text{\rm Gal}\big(F(\mu_{p^{\infty}})/F\big) \cong \Gamma \times \mathbb{F}_p^{\times}$.
Wiles \cite{Wi} proved for {\it even} Dirichlet characters $\chi$ whose restriction to $\Gamma$ is trivial,
$$
\mathcal{G}_{\chi}\big(\gamma_0-1\big) \;=\; \text{\rm char}_{\Lambda_{\mathcal{O}}}\!\left(W^{(\psi)}\right)
\quad\text{\rm up to a unit, \; where $\psi=\chi^{-1}\omega$.}
$$
Here $W=\text{\rm Gal}(M_{\infty}/F_{\infty})$
is the Galois group of the maximal abelian $p$-extension $M_{\infty}$ of $F(\mu_{p^{\infty}})$ unramified
outside $p$, $W^{(\psi)}$ denotes its $\psi$-eigenspace, and $\Gamma=\gamma_0^{\mathbb{Z}_p}$.
It follows that zeroes of the $\chi$-twisted $p$-adic zeta-function $\mathcal{G}_{\chi}(T)$
correspond (with multiplicity) to the zeroes of the characteristic polynomial of $\gamma_0\in\!\Lambda$
acting on $W^{(\psi)}\!$. We model the latter situation using the random matrix approach of \cite[Section 4]{EJV}.

\subsection{The statistics for $\text{\rm GL}(n,\mathcal{O})$  and $\text{\rm GSp}(2n,\mathcal{O})$}
\label{Sect2.1}
Henceforth we assume $F/\mathbb{Q}$ is an abelian extension.
Since $F\cap \mathbb{Q}(\mu_{p^{\infty}})=\mathbb{Q}$ as $p\nmid \text{\rm disc}(F)$, one must have
$$
\text{\rm Gal}\big(F(\mu_{p^{\infty}})/\mathbb{Q}\big)
\;\cong\; \Gamma \times \mathbb{F}_p^{\times}  \times \text{\rm Gal}(F/\mathbb{Q}) .
$$
Note that $W^{(\psi)}:=\big\{w\in \text{\rm Gal}(M_{\infty}/F_{\infty})\otimes_{\mathbb{Z}_p}
\mathcal{O}_{\chi}\;\big|\; w^g=\psi(g)\cdot w\big\}$
is a compact $\Lambda\otimes_{\mathbb{Z}_p}\!\mathcal{O}_{\chi}$-torsion module of finite type, where $\mathcal{O}_{\chi}$ is generated over $\mathbb{Z}_p$ by the values of the character $\chi$.
Throughout this section we will suppose that $\mathcal{O}=\mathcal{O}_{\chi}$
has a chosen uniformiser $\varpi$, with finite residue field $\mathcal{O}/\varpi \cong \mathbb{F}_q$ such that $q=p^f$.

\begin{nots*}
(i) We write $\text{\rm Mat}_{n\times n}(\mathcal{O})$ for the ring of $n\times n$ matrices over $\mathcal{O}$, and $\text{\rm GL}(n,\mathcal{O})\subset\text{\rm Mat}_{n\times n}(\mathcal{O})$
denotes the multiplicative group of invertible matrices.

\smallskip
(ii) The generalised symplectic group $\text{\rm GSp}(2n,\mathcal{O})=\bigcup_{\alpha\in\mathcal{O}^{\times}}\text{\rm GSp}_{\alpha}(2n,\mathcal{O})$ by definition consists of the disjoint matrix subsets
$$
\text{\rm GSp}_{\alpha}(2n,\mathcal{O})\;=\;
\left\{A\in \text{\rm GL}(n,\mathcal{O}) \;\Big|\; \big\langle A\underline{x}, A\underline{y} \big\rangle
= \alpha\cdot \big\langle \underline{x} , \underline{y} \big\rangle
\right\}
$$
where the pairing $\big\langle -,- \big\rangle:\mathcal{O}^{\oplus n}\oplus \mathcal{O}^{\oplus n}\rightarrow\mathcal{O}$ sends $\big(\underline{x} , \underline{y} \big)\mapsto\sum_{i=1}^n x_iy_{i+n}-x_{i+n}y_i$.
\end{nots*}

Let $A$ be a
matrix selected from either of the groups $\text{\rm GL}(n,\mathcal{O})$ or $\text{\rm GSp}(2n,\mathcal{O})$.
We may consider the characteristic polynomial of the matrix $A-I_n$, namely
$$
f_{A-I_n} (T) \;:=\; \det\big(T\cdot I_n - (A-I_n)\big) \;=\; \det\big((1+T)\cdot I_n - A\big) \;\in\; \mathcal{O}[T].
$$
One can view $A$ as acting on the free $\mathcal{O}$-module $M=\mathcal{O}^{\oplus n}$:
there is a {\em Fitting decomposition}
$M=M^{\text{\rm uni}}\oplus M^{\dag}$ where $A-I_n$ operates on $M^{\text{\rm uni}}$ topologically nilpotently, and on $M^{\dag}$ invertibly.
We have a corresponding factorisation $f_{A-I_n}(T) =  P_A(T) \cdot u(T)$ where
$P_A(T)$ is a distinguished polynomial -- called the {\em associated polynomial} -- and $u(T)$ satisfies
$u(0) \neq 0 \mod \varpi$ (here $P_A(T)=1$ iff $A-I_n$ is invertible over $\mathcal{O}$).
If one interprets $M^{\text{\rm uni}}$ as the sum of all generalized eigenspaces $M_{\lambda}$
for eigenvalues $\lambda\in\overline{\QQ}_p$ of $A$ satisfying $|\lambda-1|_p<1$, it follows that
$P_A(T)=\prod_{\lambda}(T+1-\lambda)$
where the product is taken over all eigenvalues $\lambda$ of $A$ which are $p$-adically close to one.

Our intended goal is to compute the probability that a uniform random matrix in either one of the above groups has associated polynomial of a given degree $r$.
For reasons of space only, we limit ourselves to doing the calculation for $\text{\rm GL}(n,\mathcal{O})$.

\begin{dfn}\label{rhoDef} If $q=p^f$ and the integer $r\geq 0$, then we define
$$
\rho(q,r)\;:=\;q^{-r}\times\;\prod_{t>r}\;\big(1-q^{-t}\big) .
$$
\end{dfn}

\begin{thm}\label{DistThm} The proportion of randomly chosen matrices $A \in \text{\rm GL}(n,\mathcal{O})$ whose associated polynomial $P_A(T)$ has degree $r$ tends towards
$\rho(q,r)$ as $n \rightarrow \infty$.
\end{thm}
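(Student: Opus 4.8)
The plan is to reduce the statement over $\cO$ to a purely combinatorial count over the residue field $\mathbb{F}_q$, and then to evaluate the resulting limit by a generating-function identity.

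\emph{Step 1: reduction modulo $\varpi$.} I would first observe that $\deg P_A(T)$ depends only on the reduction $\overline{A}\in\mathrm{GL}(n,\mathbb{F}_q)$ of $A$. Writing $f_A(x)=\det(xI_n-A)\in\cO[x]$, its reduction factors over $\mathbb{F}_q$ as $\overline{f_A}(x)=(x-1)^{d}\,\overline h(x)$ with $\overline h(1)\neq 0$, where $d=\dim_{\mathbb{F}_q}\ker(\overline A-I_n)^{n}$ is the multiplicity of the eigenvalue $1$ of $\overline A$. Since $(x-1)^{d}$ and $\overline h$ are coprime, Hensel's lemma lifts this to $f_A(x)=H(x)G(x)$ in $\cO[x]$ with $H$ monic of degree $d$, $\overline H=(x-1)^d$ and $G(1)\in\cO^\times$, so the roots of $H$ are exactly the eigenvalues of $A$ that are $p$-adically close to $1$. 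Hence $\deg P_A(T)=d$, a quantity determined by $\overline A$ alone. As the reduction map $\mathrm{GL}(n,\cO)\twoheadrightarrow\mathrm{GL}(n,\mathbb{F}_q)$ is a continuous surjection of compact groups, it carries the Haar probability measure on the source to the uniform measure on the finite target; thus the law of $\deg P_A$ for Haar-random $A\in\mathrm{GL}(n,\cO)$ coincides with the law of $d$ for uniform $\overline A\in\mathrm{GL}(n,\mathbb{F}_q)$, and it suffices to compute the latter.

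\emph{Step 2: counting over $\mathbb{F}_q$.} Let $N_r(n)$ be the number of $\overline A\in\mathrm{GL}(n,\mathbb{F}_q)$ whose generalised $1$-eigenspace has dimension $r$. The primary decomposition gives a canonical $\overline A$-invariant splitting $\mathbb{F}_q^{\,n}=U\oplus W$ in which $\overline A|_U$ has characteristic polynomial $(x-1)^r$ and $\overline A|_W$ does not admit $1$ as an eigenvalue; conversely an ordered complementary pair $(U,W)$ together with such blocks reconstructs a unique $\overline A$. Since $\mathrm{GL}(n,\mathbb{F}_q)$ acts transitively on ordered complementary pairs of dimensions $(r,n-r)$ with stabiliser $\mathrm{GL}(r,\mathbb{F}_q)\times\mathrm{GL}(n-r,\mathbb{F}_q)$, and there are $u_r=q^{r(r-1)}$ admissible blocks $\overline A|_U$ (the number of nilpotent $r\times r$ matrices, by Fine--Herstein) and $m_{n-r}$ blocks $\overline A|_W$, I obtain
\[
N_r(n)=\frac{|\mathrm{GL}(n,\mathbb{F}_q)|}{|\mathrm{GL}(r,\mathbb{F}_q)|\,|\mathrm{GL}(n-r,\mathbb{F}_q)|}\;u_r\,m_{n-r}.
\]
Dividing by $|\mathrm{GL}(n,\mathbb{F}_q)|$ yields the factorisation $P_r(n):=N_r(n)/|\mathrm{GL}(n,\mathbb{F}_q)|=a_r\,b_{n-r}$, where $a_r:=u_r/|\mathrm{GL}(r,\mathbb{F}_q)|=\frac{q^{-r}}{\prod_{i=1}^r(1-q^{-i})}$ and $b_k:=m_k/|\mathrm{GL}(k,\mathbb{F}_q)|$.

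\emph{Step 3: the limit.} Because every matrix has a well-defined $r\in\{0,\dots,n\}$, one has $\sum_{r=0}^n a_r b_{n-r}=1$ for every $n$, which for the generating functions $A(x)=\sum_r a_rx^r$ and $B(x)=\sum_k b_kx^k$ reads $A(x)B(x)=(1-x)^{-1}$. Euler's $q$-exponential identity evaluates $A(x)=\prod_{i\geq1}(1-xq^{-i})^{-1}$, so $B(x)=(1-x)^{-1}\prod_{i\geq1}(1-xq^{-i})$. The product $g(x)=\prod_{i\geq1}(1-xq^{-i})$ is entire, whence $b_k=\sum_{j=0}^k g_j\to g(1)=\prod_{i\geq1}(1-q^{-i})$ as $k\to\infty$, and therefore
\[
\lim_{n\to\infty}P_r(n)=a_r\lim_{k\to\infty}b_k=\frac{q^{-r}}{\prod_{i=1}^r(1-q^{-i})}\prod_{i\geq1}(1-q^{-i})=q^{-r}\prod_{t>r}(1-q^{-t})=\rho(q,r),
\]
as claimed.

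The genuinely delicate point is Step 3: although the factorisation $P_r(n)=a_rb_{n-r}$ makes the $n\to\infty$ limit formally trivial once $\lim_k b_k$ is known, it is the evaluation of that limit — namely that a Haar-random element of $\mathrm{GL}(k,\mathbb{F}_q)$ avoids the eigenvalue $1$ with limiting probability $\prod_{t\geq1}(1-q^{-t})$ — which carries the arithmetic content, and I would extract it from the simple pole of $B(x)$ at $x=1$ rather than by a direct inclusion--exclusion. By comparison, the reduction in Step 1 and the Fine--Herstein count in Step 2 are routine.
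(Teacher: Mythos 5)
Your proof is correct, and its skeleton (Steps 1 and 2) is the same as the paper's: reduce modulo $\varpi$ so that $\deg P_A(T)$ becomes the dimension of the generalised $1$-eigenspace of $\overline{A}\in\mathrm{GL}(n,\mathbb{F}_q)$, then count via the Fitting decomposition as a product of the unipotent count $q^{r^2-r}$, the count of invertible matrices avoiding the eigenvalue $1$, and the number of complementary splittings $\#\mathrm{GL}(n)/\bigl(\#\mathrm{GL}(r)\,\#\mathrm{GL}(n-r)\bigr)$. Where you genuinely diverge is in the evaluation of the limit, which is exactly the step the paper outsources: the paper quotes the Friedman--Washington inclusion--exclusion formula
$m_k/\#\mathrm{GL}(k,\mathbb{F}_q)=1+\sum_{i=1}^{k}\frac{(-1)^i}{(q-1)\cdots(q^i-1)}$
and then cites Washington for the fact that this tends to $\prod_{t\geq 1}(1-q^{-t})$, whereas you never compute $m_k$ at all. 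Instead you observe that total probability forces $A(x)B(x)=(1-x)^{-1}$ for the generating functions of $a_r$ and $b_k$, evaluate $A(x)=\prod_{i\geq 1}(1-xq^{-i})^{-1}$ by Euler's $q$-exponential identity, and read off $\lim_k b_k=g(1)=\prod_{i\geq1}(1-q^{-i})$ from the simple pole of $B(x)=g(x)/(1-x)$ at $x=1$. This buys you a self-contained proof (no appeal to \cite{FrWa,Wa}, and no separate argument that their formula extends from $\mathbb{F}_p$ to $\mathbb{F}_q$), at the cost of losing the explicit finite-$n$ expression for the probability that the paper displays and comments on (the rapid convergence of the alternating series). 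Your Step 1 is also more careful than the paper's one-line reduction, supplying the Hensel factorisation and the Haar-measure justification; both are welcome.
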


\begin{proof}
The degree of the associated distinguished polynomial $P_A(T)$ only depends on $A \mod \varpi$, so we reduce the task to a counting problem
involving matrices over $\mathbb{F}_q$. For example, if $r=0$ then the proportion of matrices $A$ in $\text{\rm GL}(n,\mathcal{O})$
such that $A-I_n$ is invertible is the number of $n \times n$ matrices over $\mathbb{F}_q$ whose eigenvalues lie strictly outside $\{0,1\}$, divided through by
the size of $\text{\rm GL}(n,\mathbb{F}_q)$.

Let us choose an $r \in \{0,1,\dots,n\}$. Then the number of matrices $A \in \text{\rm GL}(n,\mathbb{F}_q)$ such that $\text{\rm deg}(P_A(T))=r$ is given by the product of:
\begin{enumerate}
\item the number of unipotent $r \times r$ matrices over $\mathbb{F}_q$;
\item the number of $(n-r) \times (n-r)$ matrices $A$ such that $A$ and $A-I_{n-r}$ are invertible; and
\item  the number of decompositions of $\mathbb{F}_q^n$ into an $r$-dimensional subspace and a complementary $(n-r)$-dimensional subspace.
\end{enumerate}
In fact each matrix $A \in \text{\rm GL}(n,\mathbb{F}_q)$ is uniquely determined by the associated Fitting decomposition over $\mathbb{F}_q$:
the degree of $P_A(T)$ is $r$ if and only if the subspace on which $A-I_n$ operates nilpotently (or equivalently, $A$ acts unipotently) is $r$-dimensional.

It is a straightforward exercise to show that $\#\text{\rm GL}(r,\mathbb{F}_q) = q^{r^2} \prod_{1\leq i \leq r} \; (1-q^{-i})$.
We next claim that the cardinalities of the three sets above are respectively:
\begin{enumerate}
\item $q^{r^2 - r}$
\item $ \displaystyle\left(  1\;+\; \sum_{i=1}^{n-r} \frac{(-1)^i}{(q-1)(q^2-1) \dots (q^{i} - 1)} \right) \times  \#\text{\rm GL}(n-r,\mathbb{F}_q) $
\item $\displaystyle\frac{ \#\text{\rm GL}(n,\mathbb{F}_q)}  {\#\text{\rm GL}(r,\mathbb{F}_q) \times  \#\text{\rm GL}(n-r,\mathbb{F}_q) }$
\end{enumerate}
The formula for (1) is well known, and (3) is quite obvious from the definition. However for (2), we are looking at the number of invertible matrices not having $1$ as an eigenvalue.
In \cite{FrWa,Wa} the proportion of matrices in $\text{\rm GL}(n,\mathbb{F}_p)$ having $1$ as an eigenvalue was computed, and as the proof works over $\mathbb{F}_q$,
the formula follows.

Multiplying together the above cardinalities, we may thereby conclude that the proportion of matrices $A$ in $\text{\rm GL}(n,\mathcal{O})$ such that $\text{\rm deg}(P_A(T)) = r$
must be equal to
$$ \frac{q^{-r}} {\prod_{1 \leq i\leq r} \; (1-q^{-i})} \times  \left(  1\;+\; \sum_{i=1}^{n-r} \frac{(-1)^i}{(q-1)(q^2-1) \dots (q^{i} - 1)} \right) . $$
The first factor counts the exact proportion of unipotent matrices in $\text{\rm GL}(r,\mathcal{O})$, while the initial couple of terms in the second factor will sum up to
equal $1 - \frac{1}{q-1}=\frac{q-2}{q-1}$.
The rest of the right-hand series converges very fast, so the proportion mainly depends on $q$ and $r$, and the dimension of the $\mathcal{O}$-module seems to be less important.
Taking the limit as $n \rightarrow \infty$, the theorem will follow provided one can show that
$$ \lim_{n \rightarrow \infty}  \left( 1+ \sum_{i=1}^{n-r} \frac{(-1)^i}{(q-1)(q^2-1) \dots (q^{i} - 1)}\right) =\; \prod_{t=1}^{\infty} \big(1-q^{-t}\big)
\;=\; \rho(q,0).$$
This is established in \cite{Wa} for $q=p$, and the proof extends seamlessly to $q=p^f$.
\end{proof}

\begin{rem*} (a) If we instead replace the group $\text{\rm GL}(n,\mathcal{O})$ in Theorem \ref{DistThm} with $\text{\rm GSp}_{\alpha}(2n,\mathcal{O})$  then
the same limiting value of $\rho(q,r)$ occurs (see \cite[\S4.2]{EJV} for $q=p$). We note that Ellenberg et al.\ assume in \cite{EJV} that the multiplier $\alpha$ in the generalised symplectic case does not reduce to $1$ in the residue field $\mathbb{F}_q$. Otherwise, one obtains a different distribution, and this can be used to model the $p$-part of relative class groups in the case when the base field contains a $p$-th root of unity (see \cite{Gar, Mal}). We thank the reviewers for their comments on this point.

\smallskip
(b) Returning to the equality $\text{\rm char}_{\Lambda_{\mathcal{O}}}\!\left(W^{(\psi)}\right) = \mathcal{G}_{\chi}(\gamma_0-1)$ from Wiles' theorem,
the probability that $\mathcal{G}_{\chi}(T)$ has exactly $r$ zeroes on the open unit disk is the same as the probability that
the characteristic power series $\text{\rm char}_{\Lambda_{\mathcal{O}}}\!\left(W^{(\psi)}\right)$ has $r$ zeroes.

\smallskip
(c) In the random matrix model, $\mathcal{G}_{\chi}(T)$ represents the associated distinguished polynomial of a large invertible matrix over $\mathcal{O}$,
and $W^{(\psi)}$ represents the subspace (of a hypothetical large $\mathcal{O}$-module) on which $\gamma_0-1$ acts topologically nilpotently.
 \end{rem*}

\subsection{An application of Euler's pentagonal theorem}\label{Sect2.2}
If one believes that the statistical behaviour of $\lambda_p(\chi)$ is properly modelled through random matrix theory for either
$\text{\rm GL}(n,\mathcal{O})$
or $\text{\rm GSp}_{\alpha}(2n,\mathcal{O})$ as $n\rightarrow\infty$, one would then expect
$$
\text{\rm Prob}\big(\lambda_p(\chi)=r\big) \;\approx\; \rho\big(q,r\big)
\qquad\text{(see Theorem \ref{DistThm})}
$$
at each prime $p$ such that $\mathcal{O}_{\chi}=\mathbb{Z}_p[\text{\rm Im}(\chi)]$ has residue field $\mathbb{F}_{q}\cong\mathcal{O}_{\chi}/\varpi$ with $q=p^f$.
Indeed this is precisely Conjecture \ref{conj1} that was initially stated in the Introduction.
In this section, we derive some asymptotic formulae describing the behaviour of $\rho(q,r)$ as a function of $p\in\mathbb{P}$ (with $q=p^f$), which will be needed in our heuristics.

\begin{dfn} For each integer $m\geq 1$ and congruence class $a\in\big(\mathbb{Z}/m\mathbb{Z}\big)^{\times}\!$, one partitions the prime numbers $\mathbb{P}$ into subsets
$$
\mathbb{P}_{\!a(m)} \;:= \;
\left\{p\in\mathbb{P}\;\big|\;
p\equiv a\;(\!\!\!\!\!\!\mod m)\right\}
$$
so in particular, there is a disjoint union $\mathbb{P}=\coprod_{a(\!\!\!\!\mod m)} \mathbb{P}_{\!a(m)}$.
\end{dfn}

\begin{prop}\label{rhoProp} For $X \in \mathbb{R}$, as usual one sets $\pi(X):=\#\big\{p \in \mathbb{P}\ \big| \ p \leq X\big\}$.

\smallskip
 (i) At every integer $r\geq 0$ one has the sequence of bounds
$$
\qquad
0 \;<\; \rho(q,r)\;<\; q^{-r}  \;\leq\; 1 ,
$$
thence $\lim_{p\rightarrow\infty}\rho\big(p^f,r\big)=0$ if $r>0$
where $p$ ranges through the prime numbers.

\smallskip
\!\!(ii) Given a class $a\in\big(\mathbb{Z}/m\mathbb{Z}\big)^{\times}\!$ and a real number $X >0 $, we have the estimate
$$
\qquad\qquad
\sum_{p\in\mathbb{P}_{\!a(m)},\; p\leq X} \rho(p,r) \;\approx\;
\begin{cases}
\frac{1}{\varphi(m)}\big(\pi(X)-\log\log X\big)+O(1) & \text{if $r=0$} \\
\frac{\log\log X}{\varphi(m)}\times\big(1 + o(1)\big) & \text{if $r=1$} \\
\;O(1)  & \text{if $r\geq 2$.}
\end{cases}
$$
$$
\quad
\text{(iii) If the degree $f\geq 2$ then} \;\sum_{p\in\mathbb{P}_{\!a(m)},\; p\leq X} \rho\big(p^f,r\big) \;\approx\;
\begin{cases}
\frac{\pi(X)}{\varphi(m)}+O(1) & \text{if $r=0$} \\
\;O(1)  & \text{if $r\geq 1$.}
\end{cases}
$$
\end{prop}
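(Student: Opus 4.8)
The plan is to handle the three parts in increasing order of analytic difficulty, the heart of the matter being two classical inputs: the prime number theorem in arithmetic progressions and Mertens' theorem for primes in a fixed residue class. First I would dispose of (i), which is purely elementary. Since $q=p^f\geq 3$, every factor $1-q^{-t}$ with $t>r$ lies strictly in $(0,1)$, and because $\sum_{t>r}q^{-t}<\infty$ the infinite product $\prod_{t>r}(1-q^{-t})$ converges to a value in $(0,1)$. Multiplying by $q^{-r}$ gives $0<\rho(q,r)<q^{-r}$, while $q^{-r}\leq 1$ because $q\geq 1$ and $r\geq 0$. For $r>0$ we have $q^{-r}=p^{-fr}\to 0$ as $p\to\infty$, so the sandwich forces $\rho(p^f,r)\to 0$.

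The core of (ii) and (iii) is a uniform expansion of $\rho(q,r)$ in descending powers of $q$, which is where Euler's pentagonal number theorem enters. It gives the exact series $\prod_{t\geq 1}(1-q^{-t})=\sum_{k\in\mathbb{Z}}(-1)^k q^{-k(3k-1)/2}=1-q^{-1}-q^{-2}+q^{-5}+\cdots$, so $\rho(q,0)=1-q^{-1}+O(q^{-2})$. More generally, bounding the tail of the same expansion, $\prod_{t>r}(1-q^{-t})=1+O(q^{-(r+1)})$, whence $\rho(q,r)=q^{-r}+O(q^{-(2r+1)})$; in particular $\rho(q,1)=q^{-1}+O(q^{-3})$, and for $r\geq 2$ part (i) already supplies the crude bound $\rho(q,r)<q^{-r}\leq q^{-2}$.

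I would then substitute these expansions (with $q=p$ for (ii), and $q=p^f$ with $f\geq 2$ for (iii)) and sum over $p\in\mathbb{P}_{a(m)}$ with $p\leq X$, invoking $\#\{p\leq X:\,p\equiv a\ (m)\}\sim\pi(X)/\varphi(m)$ and Mertens' theorem $\sum_{p\leq X,\,p\equiv a(m)}p^{-1}=\frac{\log\log X}{\varphi(m)}+O(1)$. For (ii) with $r=0$, the sum splits as $\#\{p\leq X:\,p\equiv a\}-\sum p^{-1}+\sum O(p^{-2})$; the last piece is $O(1)$, the first is $\approx\pi(X)/\varphi(m)$, and the middle is $\frac{\log\log X}{\varphi(m)}+O(1)$, producing the stated $\frac1{\varphi(m)}(\pi(X)-\log\log X)+O(1)$. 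For $r=1$ only the Mertens sum survives (the correction $\sum O(p^{-3})=O(1)$ is absorbed into $1+o(1)$ since $\log\log X\to\infty$), giving $\frac{\log\log X}{\varphi(m)}(1+o(1))$; and for $r\geq 2$ one has $\sum_{p\leq X}\rho(p,r)<\sum_p p^{-2}=O(1)$. The regime $f\geq 2$ in (iii) is cleaner, because $\rho(p^f,0)=1-p^{-f}+O(p^{-2f})$ with $f\geq 2$ makes the correction $\sum_p p^{-f}$ already convergent, hence $O(1)$, leaving $\pi(X)/\varphi(m)+O(1)$; and for $r\geq 1$ we get $\rho(p^f,r)<p^{-fr}\leq p^{-2}$, so the sum is $O(1)$ once more.

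The one genuinely delicate point — and the reason the statement is written with ``$\approx$'' rather than ``$=$'' — is the replacement of the exact residue-class count $\#\{p\leq X:\,p\equiv a\ (m)\}$ by its main term $\pi(X)/\varphi(m)$. Their difference is controlled by the error term in the prime number theorem for arithmetic progressions, which is $O\!\left(X\exp(-c\sqrt{\log X})\right)$ by Siegel--Walfisz (or $O(X^{1/2+\varepsilon})$ under GRH), and so is far larger than the $O(1)$ appearing on the right-hand side, even though it is of smaller order than the main term $\pi(X)/\varphi(m)\asymp X/\log X$. Thus the ``$+O(1)$'' is honest only after the count itself is taken as the principal term: the secondary term $\log\log X/\varphi(m)$ coming from Mertens and the genuine $O(1)$ coming from the convergent series $\sum_p O(p^{-2})$ are both exactly of the advertised size, and it is only the passage from the arithmetic-progression count to $\pi(X)/\varphi(m)$ that the symbol ``$\approx$'' is quietly absorbing.
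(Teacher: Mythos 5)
Your proposal is correct and follows essentially the same route as the paper: Euler's pentagonal theorem to expand $\rho(q,r)$ in descending powers of $q$, the prime number theorem in arithmetic progressions for the main term, and Mertens' theorem in a residue class for the $\sum p^{-1}$ contribution, with the $r\geq 2$ and $f\geq 2$ cases killed by convergent prime sums. The only cosmetic differences are that you use a direct $O(q^{-(2r+1)})$ expansion where the paper runs an explicit $\epsilon$-sandwich for $r=1$ and the bound $\rho(p^f,r)<(2/p^f)^r$ for $r\geq 2$, and your closing remark about the Siegel--Walfisz error being absorbed into ``$\approx$'' is a fair observation the paper leaves implicit.
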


\begin{proof} 
(i) is completely obvious.
To establish parts (ii) and (iii), we recall Euler's Pentagonal Formula from \cite{Eu}: for every $y\in\mathbb{R}$ with $|y|<1$,
$$
\prod_{t\geq 1}\;(1-y^t) \;=\; 1 \;+\; \sum_{k=1}^{\infty}\; (-1)^k\Big(y^{\frac{3k^2-k}{2}}+y^{\frac{3k^2+k}{2}}\Big) .
$$
Now substituting in $y=q^{-1}=p^{-f}$, one immediately deduces that
$$
\rho\big(p^f,0\big) =\;
\prod_{t\geq 1}\;(1-p^{-ft}) \;=\; 1 -p^{-f}-p^{-2f}+ \sum_{k=2}^{\infty} (-1)^k\Big(p^{-\frac{(3k^2-k)f}{2}}+p^{-\frac{(3k^2+k)f}{2}}\Big) .
$$
In particular, we have $\lim_{p\rightarrow\infty}\rho\big(p^f,0\big)=1$.
Summing over the primes $p$ congruent to $a\;(\!\!\!\!\mod m)$ and less than or equal to $X$:
$$
\sum_{p\in\mathbb{P}_{\!a(m)},\; p\leq X} \rho\big(p^f,0\big) \;\;\approx\;\;
\text{\rm dens}(\mathbb{P}_{\!a(m)})\cdot \pi(X) \;-\sum_{p\in\mathbb{P}_{\!a(m)},\; p\leq X} p^{-f} \;+ \sum_{p\in\mathbb{P}_{\!a(m)},\; p\leq X}  a(p^f)
$$
where the coefficients $a(p^f)=-p^{-2f}+ \sum_{k=2}^{\infty} (-1)^k\Big(p^{-\frac{(3k^2-k)f}{2}}+p^{-\frac{(3k^2+k)f}{2}}\Big)\in\mathbb{R}$,
and the notation $\text{\rm dens}(\mathcal{S})$ indicates the Dirichlet density of the set $\mathcal{S}$ inside $\mathbb{P}$.

\begin{rems*} (a) If $f=1$ then $\sum_{p\in\mathbb{P}_{\!a(m)},\; p\leq X}\; p^{-f}\approx \text{\rm dens}(\mathbb{P}_{\!a(m)})\cdot\log\log X + O(1)$
from \cite{Ap}, and if $f>1$ this sum is bounded above independently of the choice of $X$.

\smallskip
(b) The infinite series $\sum_{p\in\mathbb{P}_{\!a(m)}}  p^{-2f}$ will always converge; in fact $\sum_{p\in\mathbb{P}_{\!a(m)}}p^{-2f}$
is bounded above by $\wp(2f)$ where $\wp(s)=\sum_{p\in\mathbb{P}}\; p^{-s}$ is the prime zeta-function.

\smallskip
(c) If $\wp_{a(m)}(s)=\sum_{p\in\mathbb{P}_{\!a(m)}}\; p^{-s}$ denotes a Hurwitz-type prime zeta-function, the convergence of
$$
\sum_{p\in\mathbb{P}_{\!a(m)}}\;
\sum_{k=2}^{\infty} \; (-1)^k\Big(p^{-\frac{(3k^2-k)f}{2}}+p^{-\frac{(3k^2+k)f}{2}}\Big)
$$
to the explicit value $\sum_{k=2}^{\infty} \; (-1)^k\Big(\wp_{a(m)}\Big(\frac{(3k^2-k)f}{2}\Big)+\wp_{a(m)}\Big(\frac{(3k^2+k)f}{2}\Big)\Big)$ inside $\mathbb{R}$ follows from the absolute convergence of the double sum.
\end{rems*}

Combining together (a)-(c) above and also noting that $\text{\rm dens}(\mathbb{P}_{\!a(m)})=1/\varphi(m)$, the stated estimate for $\rho(q,r)$ follows  immediately in the situation where $r=0$.
Let us instead suppose that $r=1$. Then for any choice of $\epsilon>0$,
$$
p^{-f}\cdot\rho\big(p^f,0\big) \;<\;
\rho\big(p^f,1\big) = \frac{p^{-f}}{1-p^{-f}}\cdot\rho\big(p^f,0\big)  \;<\;
(1+\epsilon)\times p^{-f}\cdot \rho\big(p^f,0\big)
$$
provided that the prime $p>X_{\epsilon}:=\big(1+\frac{1}{\epsilon}\big)^{1/f}$ say.
From the previous estimates,
$$
\sum_{p\in\mathbb{P}_{\!a(m)},\; X_{\epsilon}<p\leq X} \rho\big(p^f,1\big) \;\;\lesssim\;\;
\left( 1+\epsilon  \right) 
\times \sum_{p\in\mathbb{P}_{\!a(m)},\; X_{\epsilon}<p\leq X} p^{-f} .
$$
If $f=1$ then $\sum_{p\in\mathbb{P}_{\!a(m)},\; p\leq X} \; p^{-f} \approx\frac{\log\log X}{\varphi(m)}+O(1)$,
otherwise the sum is bounded. As a direct consequence, one has
$
\sum_{p\in\mathbb{P}_{\!a(m)},\; p\leq X} \rho\big(p,1\big) \approx \Big(\frac{1+o(1)}{\varphi(m)}\Big)\cdot\log\log X
+O(1)
$
whilst if $f\geq 2$ then $\sum_{p\in\mathbb{P}_{\!a(m)},\; p\leq X} \;\rho\big(p^f,1 \big)$ is bounded above independently of $X$.

\noindent
Finally, let us suppose the integer $r\geq 2$. Then one has the strict upper bounds
$$
\rho\big(p^f,r\big) = \prod_{1\leq t\leq r}\frac{1}{1-p^{-ft}}\times p^{-fr}\cdot\rho\big(p^f,0\big)
\;\;<\;\; 2^r \times p^{-fr}\cdot\rho\big(p^f,0\big)
\;\;<\;\; \left(\frac{2}{p^f}\right)^r
$$
because $\rho\big(p^f,0\big)<1$. Clearly $\sum_{p\in\mathbb{P}_{\!a(m)},\; p\leq X}\left(\frac{2}{p^f}\right)^r$ is bounded
independently of $X$  hence the sum of these $\rho\big(p^f,r\big)$ is too, which completes all cases in (ii) and (iii).
\end{proof}

\begin{dfn} Given any integer $r\geq 0$, a real number $X>0$ and a Dirichlet character $\chi$, let us define
$$
\theta_{\leq X}^{(r)}(\chi) \;:=\; \#\left\{p\in\mathbb{P}\;\Big|\; p\leq X \;\text{and}\; \lambda_p^{\text{corr}}(\chi)=r
\right\}
$$
where (as before) one sets $\lambda_p^{\text{corr}}(\chi) := \lambda_p(\chi)$ in the situation for which $\chi \omega^{-1} (p) \neq 1$, and
otherwise sets $\lambda_p^{\text{corr}}(\chi) :=  \lambda_p(\chi)-1$ if $\chi \omega^{-1} (p) = 1$
(the trivial zero situation).
\end{dfn}

Equivalently, $\theta_{\leq X}^{(r)}(\chi)$ counts up precisely those primes less than or equal to $X$ whose $\chi$-twisted $\lambda$-invariant is equal to $r$.
Putting $m=\ord(\chi)$ we may partition the primes into
$\mathbb{P}=\coprod_{a(\!\!\!\!\mod m)} \mathbb{P}_{\!a(m)}$, which yields the decomposition
$$
\theta_{\leq X}^{(r)}(\chi)
\;\;=\;
\sum_{a\in(\mathbb{Z}/m\mathbb{Z})^{\times}} \;\sum_{p\in\mathbb{P}_{\!a(m)},\; p\leq X}  
\delta \big(\lambda_p^{\text{corr}}(\chi)=r\big)
$$
where $\delta(\text{\rm TRUE})=1$ and $\delta(\text{\rm FALSE})=0$.

For example, if $a=1$ then $\mathbb{P}_{\!1(m)}$ are the primes $p$
congruent to $1$ modulo $\ord(\chi)$ i.e. exactly those prime numbers $p$ such that
$\mathcal{O}_{\chi}=\mathbb{Z}_p[\text{\rm Im}(\chi)]$ has residue field $\mathbb{F}_p$.
Given the random matrix model is accurate, the above sum is approximated by
$$
\theta_{\leq X}^{(r)}(\chi) \;\;\approx\;  \sum_{p\in\mathbb{P}_{\!1(m)},\; p\leq X}   \rho\big(p,r\big)
\;\;+\!\!\sum_{\tiny\begin{array}{c} a=2, \\ \gcd(a,m)=1\end{array}}^m \!\sum_{p\in\mathbb{P}_{\!a(m)},\; p\leq X} \rho\big(\#(\mathcal{O}_{\chi}/\varpi),r\big) .
$$
As we shall shortly see, as an isolated function of $r$ (with $X$ fixed) the summation attains its largest values when $r=0$, and tends towards zero as $r$ becomes large.
The terms on the right-hand side of this approximation can be estimated using Proposition \ref{rhoProp}, in the following manner:
\begin{itemize}
\item if $r=0$ then the first sum behaves like $\frac{\pi(X)-\log\log X}{\varphi(m)}+O(1)$  while the second sum behaves like $\big(1-\frac{1}{\varphi(m)}\big)\cdot\pi(X)+O(1)$,
as the value $X\rightarrow\infty$;

\item if $r=1$, the first sum is $\frac{\log\log X}{\varphi(m)}\cdot\big(1 + o(1)\big)$
and the second sum is $O(1)$;

%\smallskip
\item if $r\geq 2$ then both of these sums will be bounded independently of $X>0$.
\end{itemize}
Noting the various predictions arising from this model, one should therefore expect
$$
\theta_{\leq X}^{(r)}(\chi) \;\;\approx\;\;
\begin{cases}
\pi(X)-\frac{\log\log X}{\varphi(m)}+O(1) & \text{if $r=0$} \\
\frac{\log\log X}{\varphi(m)}+o\big(\log\log X\big) & \text{if $r=1$} \\
\;O(1)  & \text{if $r\geq 2$.}
\end{cases}
$$
Lastly we recall that the prime number theorem implies $\pi(X)\approx \frac{X}{\log(X)}$ for $X\gg 0$.

\begin{conj} For increasing $X$, one predicts the asymptotic behaviour:
\begin{eqnarray*}
& & \;\text{(i)} \;\;\; \lim_{X\rightarrow\infty } \frac{\theta_{\leq X}^{(0)}(\chi)}{X/\log(X)} \;=\; 1 \\
& & \text{(ii)} \;\;\; \lim_{X\rightarrow\infty } \frac{\theta_{\leq X}^{(1)}(\chi)}{\log\log(X)} \;=\; \frac{1}{\varphi\big(\ord(\chi)\big)} \\
& & \text{(iii)} \;\;\; \theta_{\leq X}^{(r)}(\chi)\;\;\text{is bounded above if $r\geq 2$.}
\qquad\qquad\qquad\qquad\qquad\qquad
\end{eqnarray*}
\end{conj}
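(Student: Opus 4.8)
The plan is to derive the three limits directly from the summatory estimates of Proposition~\ref{rhoProp}, treating the random matrix heuristic of Conjecture~\ref{conj1} as the sole input: namely that $\text{\rm Prob}\big(\lambda_p^{\text{corr}}(\chi)=r\big)\approx\rho\big(\#(\mathcal{O}_{\chi}/\varpi),r\big)$ with $\#(\mathcal{O}_{\chi}/\varpi)=p^f$. Since the corrected invariant already removes the trivial zero whenever $\chi\omega^{-1}(p)=1$, the count $\theta_{\leq X}^{(r)}(\chi)$ is aligned with the distribution predicted by Theorem~\ref{DistThm}, so no separate bookkeeping for trivial zeros is needed beyond this substitution. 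First I would set $m=\ord(\chi)$ and split the counting function across residue classes, writing $\theta_{\leq X}^{(r)}(\chi)$ as the sum over $a\in(\mathbb{Z}/m\mathbb{Z})^{\times}$ of $\sum_{p\in\mathbb{P}_{a(m)},\,p\leq X}\delta\big(\lambda_p^{\text{corr}}(\chi)=r\big)$, isolating the class $a=1$ (where $p\equiv 1\ (\mathrm{mod}\ m)$ forces $f=1$ and $q=p$) from the remaining $\varphi(m)-1$ classes (where $f\geq 2$ and $q=p^f$).

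For part (i) I would apply Proposition~\ref{rhoProp}(ii) with $r=0$ to the $a=1$ class, obtaining $\tfrac{1}{\varphi(m)}\big(\pi(X)-\log\log X\big)+O(1)$, and Proposition~\ref{rhoProp}(iii) with $r=0$ to each of the $\varphi(m)-1$ higher-degree classes, each contributing $\tfrac{\pi(X)}{\varphi(m)}+O(1)$. Summing gives $\theta_{\leq X}^{(0)}(\chi)=\pi(X)-\tfrac{\log\log X}{\varphi(m)}+O(1)$; dividing by $X/\log(X)$ and invoking the prime number theorem $\pi(X)\sim X/\log(X)$ (so that the $\log\log X$ and $O(1)$ terms are negligible after normalisation) yields the limit $1$. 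For part (ii), only the $a=1$ class contributes at leading order: Proposition~\ref{rhoProp}(ii) with $r=1$ gives $\tfrac{\log\log X}{\varphi(m)}\big(1+o(1)\big)$, while Proposition~\ref{rhoProp}(iii) bounds the combined $f\geq 2$ contribution by $O(1)$; dividing through by $\log\log X$ sends the bounded part to zero and leaves $\tfrac{1}{\varphi(m)}$. For part (iii), both Proposition~\ref{rhoProp}(ii) and (iii) show that every partial sum $\sum_{p\leq X}\rho(p^f,r)$ with $r\geq 2$ is $O(1)$ uniformly in $X$, so $\theta_{\leq X}^{(r)}(\chi)$ stays bounded.

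The genuine obstacle is that this entire argument is only as strong as the approximation supplied by Conjecture~\ref{conj1}: replacing $\text{\rm Prob}\big(\lambda_p^{\text{corr}}(\chi)=r\big)$ by $\rho(p^f,r)$ must hold with an error small enough to survive summation against the relevant density. Case (i) is robust, since its main term grows like $\pi(X)$ and can absorb per-prime errors of size $o(1)$ on average. Case (ii) is by far the most delicate, because the predicted main term grows only like $\log\log X$ -- an extraordinarily slow rate -- so that even a slight systematic bias in the individual probabilities, or any unexpected correlation among the invariants $\lambda_p(\chi)$ across primes, would be enough to corrupt the leading constant $1/\varphi(m)$. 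Making the $o(1)$ in the $r=1$ estimate rigorous, rather than heuristic, is precisely where an unconditional proof would founder; this is why the statement is posed as a conjecture conditional on the random matrix model rather than as a theorem.
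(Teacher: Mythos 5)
Your derivation matches the paper's own almost step for step: the same partition of primes into residue classes modulo $\ord(\chi)$, the same identification of the class $a=1$ as the only one with residue degree $f=1$, and the same application of Proposition~\ref{rhoProp}(ii)--(iii) followed by the prime number theorem to extract the three asymptotics. Your closing remarks on why the $r=1$ case is the fragile one (the $\log\log X$ main term) are a sensible addition but do not change the argument, which is correct and identical in substance to the paper's.
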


\subsection{Heuristics for the $\chi$-regularity of $p$}\label{Sect2.3}
Recall that a prime number $p$ is called regular if it does not divide the class number of the cyclotomic field $\mathbb{Q}(\mu_p)$. Indeed it
is essentially a consequence of Kummer's criterion that a prime $p$ is regular if and only if each power series $\mathcal{G}_{\!\omega^{2j}}(T)\in\mathbb{Z}_p[\![T]\!]$
is a unit of the Iwasawa algebra.

\begin{dfn} (i) For any Dirichlet character $\chi$, we shall define the {\it total $\lambda$-invariant} associated to $\chi$ by
$$
\qquad
\lambda_p^{\text{\rm tot}}(\chi) \; := \sum_{\substack{j=0 \\ \chi\omega^j \text{even}}}^{p-2} \lambda_p^{\text{corr}}(\chi\omega^{j})
\;\in\; \mathbb{Z}_{\geq 0} .
$$
\qquad
(ii) A prime $p$ is called {\it $\chi$-regular}\footnote{This differs in general from Ernvall's definition of $\chi$-regularity for a prime $p$, as he requires
that $\lambda_p(\chi^{\sigma}\omega^{j})=0$
for all even branches and elements $\sigma\in\text{\rm Gal}(\overline{\mathbb{Q}}/\mathbb{Q})$, which is a stronger condition.}
if $\lambda_p^{\text{corr}}(\chi\omega^{j})=0$ for every $j\in\{0,\dots,p-2\}$ such that $\chi\omega^{j}$ is even;
otherwise it is called a {\it $\chi$-irregular} prime, not surprisingly.
\end{dfn}

Applying the interpolation formula (\ref{InterpFormula}) for the $\chi$-twisted $p$-adic zeta-function,
if $\chi$ is even then it follows  that a prime $p$ is $\chi$-regular if and only if $p$
does not divide the numerator of any of the $\chi$-twisted Bernoulli numbers $B_{2,\chi},\dots,B_{p-1,\chi}$.
Alternatively, if $\chi$ is an odd Dirichlet character then $p$ will be $\chi$-regular if and only if $p$
does not divide the numerator of any of the $\chi$-twisted numbers $B_{1,\chi},\dots,B_{p-2,\chi}$.
Now assuming our $p$-adic random matrix models are accurate,
the probability that $\lambda_p(\chi\omega^{j})=0$ should be independent of the branch $\omega^{j}$.
This independence implies
$$
\text{\rm Prob}\big(\lambda_p^{\text{\rm tot}}(\chi)=0\big) \;\;\approx\;\;\prod_{\substack{j=0 \\ \chi \omega^j \text{ even} }}^{p-2} \;
\text{\rm Prob}\big(\lambda^{\text{corr}}_p(\chi\omega^{j})=0\big) \;\;\approx\;\; \rho(q,0)^{\frac{p-1}{2}}
$$
where the residue field of $\mathbb{Z}_p[\text{\rm Im}(\chi)]$ has $q=p^f$ elements,
thus one should expect
$$
\pi(X)^{-1}\;\times\sum_{p\in\mathbb{P},\;p\leq X} \; \rho\big(\#(\mathcal{O}_{\chi}/\varpi),0\big)^{\frac{p-1}{2}}
$$
to tend towards the true proportion of $\chi$-regular primes as the bound $X$ gets larger.

\begin{thm}\label{TotThm} Given a real number $X\gg 0$, one has the approximation
$$
\sum_{p\in\mathbb{P},\;p\leq X} \rho\big(\#(\mathcal{O}_{\chi}/\varpi),0\big)^{\frac{p-1}{2}}\approx\;
\pi(X)\times\left(\frac{\varphi(\ord(\chi))+e^{-1/2}-1}{\varphi(\ord(\chi))}\right) +\; O\big(1 \big) .
$$
\end{thm}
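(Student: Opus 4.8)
\emph{The plan} is to partition the sum according to the residue class of $p$ modulo $m:=\ord(\chi)$, to replace each summand by its limiting value as $p\to\infty$, and to count the primes in each class using the prime number theorem in arithmetic progressions. The key structural observation is that $\#(\mathcal{O}_{\chi}/\varpi)=p^{f}$, where $f$ is the multiplicative order of $p$ modulo $m$; since this order depends only on $p\bmod m$, the exponent $f$ is constant on each progression. Moreover $f=1$ holds exactly for the single class $a=1$ (the totally split primes), and $f\geq2$ on all remaining $\varphi(m)-1$ classes. I would therefore write
$$
\sum_{p\leq X}\rho\big(\#(\mathcal{O}_{\chi}/\varpi),0\big)^{\frac{p-1}{2}}
=\sum_{a\in(\mathbb{Z}/m\mathbb{Z})^{\times}}\;\sum_{p\equiv a\,(m),\,p\leq X}\rho\big(p^{f(a)},0\big)^{\frac{p-1}{2}}
$$
and treat the class $a=1$ separately from those with $f\geq2$.

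The analytic core is a sharp asymptotic for each summand. Since $\rho(p^{f},0)=\prod_{t\geq1}(1-p^{-ft})$ by Definition~\ref{rhoDef}, taking logarithms and expanding $\log(1-x)=-x+O(x^{2})$ gives $\log\rho(p^{f},0)=-p^{-f}+O(p^{-2f})$, with the refinement $\log\rho(p,0)=-p^{-1}-\tfrac{3}{2}p^{-2}+O(p^{-3})$ when $f=1$. Multiplying by $(p-1)/2$ and exponentiating then yields the pointwise estimates
$$
\rho\big(p,0\big)^{\frac{p-1}{2}}=e^{-1/2}+O(p^{-1})\;\;(f=1),
\qquad
\rho\big(p^{f},0\big)^{\frac{p-1}{2}}=1+O(p^{\,1-f})\;\;(f\geq2).
$$
The dichotomy is dictated by $\tfrac{p-1}{2}p^{-f}$, which tends to $\tfrac12$ when $f=1$ and to $0$ when $f\geq2$; these two limiting values $e^{-1/2}$ and $1$ are precisely what split the final constant into its two pieces.

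To sum, I would insert these limits and apply the prime number theorem for arithmetic progressions. The class $a=1$ contributes $e^{-1/2}\cdot\#\{p\equiv1\,(m):p\leq X\}$, while the other classes each contribute $\#\{p\equiv a\,(m):p\leq X\}$; since the latter counts add up to $\pi(X)+O(1)$, one obtains
$$
\sum_{p\leq X}\rho\big(\#(\mathcal{O}_{\chi}/\varpi),0\big)^{\frac{p-1}{2}}
=\pi(X)+(e^{-1/2}-1)\,\#\{p\equiv1\,(m):p\leq X\}+(\text{error}).
$$
Replacing $\#\{p\equiv1\,(m):p\leq X\}$ by its expected value $\pi(X)/\varphi(m)$ collapses this to the claimed main term $\pi(X)\cdot\frac{\varphi(m)+e^{-1/2}-1}{\varphi(m)}$.

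\emph{The main obstacle} is the honest size of the error, where the statement's $O(1)$ is optimistic. Two effects intervene. First, the $O(p^{-1})$ deviations on the class $a=1$ and on the finitely many classes with $f=2$ each sum to $O(\log\log X)$ via $\sum_{p\leq X}p^{-1}\sim\log\log X$, whereas the $O(p^{1-f})$ deviations for $f\geq3$ sum to $O(1)$. Second, and more seriously, replacing $\#\{p\equiv1\,(m):p\leq X\}$ by $\pi(X)/\varphi(m)$ introduces the error $(e^{-1/2}-1)$ times the discrepancy of primes in that progression, which is only $o(\pi(X))$ in general (and $O(\sqrt{X}\log X)$ under GRH). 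I would therefore present the identity one step before the final substitution as the rigorous content, and read the stated $O(1)$ as the assertion that the remaining terms are of strictly lower order than the main term $\sim\pi(X)$.
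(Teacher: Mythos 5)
Your proposal is correct and follows the same overall strategy as the paper: split the sum according to whether the residue class degree $f$ equals $1$ (the single class $p\equiv 1\pmod m$) or $f\geq 2$, show the summand tends to $e^{-1/2}$ in the first case and to $1$ in the second, and then count primes in each class by their density $1/\varphi(m)$. The technical implementation differs. Where you take logarithms and expand to get pointwise asymptotics $\rho(p,0)^{(p-1)/2}=e^{-1/2}+O(p^{-1})$ and $\rho(p^f,0)^{(p-1)/2}=1+O(p^{1-f})$, the paper instead sandwiches $\rho(p^f,0)^{(p-1)/2}$ between $\beta(p^f)^{(p-1)/2}\prod_{t\geq 2}(1-p^{-tf})$ and $\beta(p^f)^{(p-1)/2}\prod_{t\geq 3}(1-p^{-tf})$ with $\beta(y)=(1-y^{-1})(1-y^{-2})$, computes $\lim_p\beta(p)^{(p-1)/2}=e^{-1/2}$ via $(1+x/n)^n\to e^x$, and sums the products using Euler's pentagonal theorem; the two devices deliver the same limits, and yours is arguably more direct. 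The genuine added value in your write-up is the error analysis: you are right that the stated $O(1)$ is optimistic, since the $O(p^{-1})$ deviations on the $f=1$ class already contribute $O(\log\log X)$, and replacing $\pi(X;m,1)$ by $\pi(X)/\varphi(m)$ costs the discrepancy in the prime number theorem for arithmetic progressions, which is only $o(\pi(X))$ unconditionally. The paper's own $\epsilon$--$N_\epsilon$ argument in fact only yields an error of size $\epsilon\cdot\pi(X)$ for arbitrary $\epsilon>0$, i.e.\ $o(\pi(X))$, so your reading of the theorem --- main term $\sim\pi(X)\cdot\frac{\varphi(m)+e^{-1/2}-1}{\varphi(m)}$ with a lower-order remainder rather than a literal $O(1)$ --- is the correct interpretation, and your version of the statement is the sharper one.
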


\begin{proof}
We begin with some simple estimates. If one has $y\in (0,1)$ and $p>3$
then $1-\left(\frac{p-1}{2}\right)y < (1-y)^{\frac{p-1}{2}} < 1-y$,
in which case
$$
1-p^{-(t-1)f} \;<\;
1-\left(\frac{p-1}{2}\right)p^{-tf} \;<\; \big(1-p^{-tf}\big)^{\frac{p-1}{2}} \;<\; 1-p^{-tf} .
$$
Taking a product over $t$:\;
$
\prod_{t\geq 2} \big(1-p^{-tf}\big)
< \prod_{t\geq 3} \big(1-p^{-tf}\big)^{\frac{p-1}{2}}
< \prod_{t\geq 3} \big(1-p^{-tf}\big) .
$
If we write $\beta(y)=(1-y^{-1})(1-y^{-2})$, then by Definition \ref{rhoDef} one has the identity
$$
\prod_{t\geq 3} \big(1-p^{-tf}\big)^{\frac{p-1}{2}}
=\left(\frac{ \rho(p^f,0)}{(1-p^{-f})(1-p^{-2f})}\right)^{\frac{p-1}{2}}
\!=\; \beta\big(p^f\big)^{\frac{1-p}{2}}\!\cdot \rho\big(p^f,0\big)^{\frac{p-1}{2}} .
$$
Plugging this expression into our inequalities, one obtains the strict bounds
\begin{equation}\label{EqBnds}
\;\beta\big(p^f\big)^{\frac{p-1}{2}}\!\cdot \prod_{t\geq 2} \big(1-p^{-tf}\big)
\;\;<\;\; \rho\big(p^f,0\big)^{\frac{p-1}{2}}
\;\;<\;\; \beta\big(p^f\big)^{\frac{p-1}{2}}\!\cdot  \prod_{t\geq 3} \big(1-p^{-tf}\big) .
\end{equation}
It is well known that $\lim_{n\rightarrow\infty}\big(1+x/n\big)^{n}=\exp(x)$, so for $x=-1$ we deduce that
$$
\lim_{p\rightarrow\infty}\beta(p)^{\frac{p-1}{2}} =\;
\sqrt{\lim_{p\rightarrow\infty}\!\left(1-\frac{1}{p}\right)^p \times \lim_{p\rightarrow\infty} \!\frac{\big(1-\frac{1}{p^2}\big)^{p-1}}{\big(1-\frac{1}{p}\big)}}
\;=\; \sqrt{\exp\left(-1\right)\!\times 1 } \;=\; e^{-1/2}.
$$
Exploiting the algebra of limits, if the exponent $f\geq 2$ then $\lim_{p\rightarrow\infty}\beta\big(p^f\big)^{\frac{p-1}{2}}=1$.
Now fix any $\epsilon>0$ and write $m=\ord(\chi)$. Then there exists $N_{\epsilon}\in\mathbb{N}$ such that
\begin{itemize}
\item
$\Big|\beta\big(\#(\mathcal{O}_{\chi}/\varpi)\big)^{\frac{p-1}{2}}-e^{-1/2}\Big|<\epsilon$ for all $p\in\mathbb{P}_{\!1(m)}$ with $p\geq N_{\epsilon}$, \quad and

\smallskip
\item
$\Big|\beta\big(\#(\mathcal{O}_{\chi}/\varpi)\big)^{\frac{p-1}{2}}-1\Big|<\epsilon$ for all $p\in\mathbb{P}-\mathbb{P}_{\!1(m)}$ with $p\geq N_{\epsilon}$.
\end{itemize}
Firstly, the bounds in (\ref{EqBnds}) and our computation of $\lim_{p\rightarrow\infty}\beta(p)^{\frac{p-1}{2}}$ imply that
\begin{eqnarray*}
& & \big(e^{-1/2}-\epsilon\big)\times\!\!\! \sum_{\tiny\begin{array}{c} p\in\mathbb{P}_{\!1(m)}, \\ N_{\epsilon}\leq p\leq X\end{array}}
\prod_{t\geq 2} \big(1-p^{-t}\big) \\
& & \;\;<\!
\sum_{\tiny\begin{array}{c} p\in\mathbb{P}_{\!1(m)}, \\ N_{\epsilon}\leq p\leq X\end{array}} \rho\big(p,0\big)^{\frac{p-1}{2}}
\;\;<\; \;
\big(e^{-1/2}+\epsilon\big)\times\!\!\! \sum_{\tiny\begin{array}{c} p\in\mathbb{P}_{\!1(m)}, \\ N_{\epsilon}\leq p\leq X\end{array}} \prod_{t\geq 3} \big(1-p^{-t}\big) .
\end{eqnarray*}
Then it is an exercise involving Euler's pentagonal theorem to show
$$
\sum_{\tiny\begin{array}{c} p\in\mathbb{P}_{\!1(m)}, \\ N_{\epsilon}\leq p\leq X\end{array}}\prod_{t\geq d} \big(1-p^{-t}\big)
\;\approx\;\; \frac{\pi(X)}{\varphi(m)} \;+\;O(1)
\quad\text{with $d=2$ or $3$;}
$$
consequently $\sum_{p\in\mathbb{P}_{\!1(m)},\;p\leq X} \;\;\rho\big(p,0\big)^{\frac{p-1}{2}}\approx\; e^{-1/2}\cdot\frac{\pi(X)}{\varphi(m)}+O(1)$
for $X\gg 0$.

Secondly, if $q=p^f=\#(\mathcal{O}_{\chi}/\varpi)$ with $f\geq 2$ as before, then
again the bounds in (\ref{EqBnds}) and our computation of $\lim_{p\rightarrow\infty}\beta(p^f)^{\frac{p-1}{2}}$ together imply
\begin{eqnarray*}
& & \big(1-\epsilon\big)\times\!\!\! \sum_{\tiny\begin{array}{c} p\in\mathbb{P}-\mathbb{P}_{\!1(m)}, \\ N_{\epsilon}\leq p\leq X\end{array}}
\prod_{t\geq 2} \big(1-p^{-tf}\big) \\
& & \;\;<\!
\sum_{\tiny\begin{array}{c} p\in\mathbb{P}-\mathbb{P}_{\!1(m)}, \\ N_{\epsilon}\leq p\leq X\end{array}} \rho\big(p^f,0\big)^{\frac{p-1}{2}}
\;\;<\; \;
\big(1+\epsilon\big)\times\!\!\! \sum_{\tiny\begin{array}{c} p\in\mathbb{P}-\mathbb{P}_{\!1(m)}, \\ N_{\epsilon}\leq p\leq X\end{array}} \prod_{t\geq 3} \big(1-p^{-tf}\big) .
\end{eqnarray*}
Another exercise involving Euler's pentagonal theorem shows that for $d=2$ or $3$,
$$
\sum_{\tiny\begin{array}{c} p\in\mathbb{P}-\mathbb{P}_{\!1(m)}, \\ N_{\epsilon}\leq p\leq X\end{array}}\prod_{t\geq d} \big(1-p^{-tf}\big)
\;\approx\;\; \frac{(\varphi(m)-1)\cdot \pi(X) }{\varphi(m)} \;+\;O(1) ;
$$
it follows that $\sum_{p\in\mathbb{P}-\mathbb{P}_{\!1(m)},\;p\leq X} \;\rho\big(p^f,0\big)^{\frac{p-1}{2}}\approx\; \left(1-\frac{1}{\varphi(m)}\right)\cdot\pi(X)+O(1)$.

\noindent
Finally, one can always decompose
$$
\sum_{p\in\mathbb{P},\;p\leq X} \rho\big(\#(\mathcal{O}_{\chi}/\varpi),0\big)^{\frac{p-1}{2}} \;=\;
\sum_{p\in\mathbb{P}_{\!1(m)},\;p\leq X} \rho\big(p,0\big)^{\frac{p-1}{2}}
\; +\!\!
\sum_{p\in\mathbb{P}-\mathbb{P}_{\!1(m)},\;p\leq X} \rho\big(p^f,0\big)^{\frac{p-1}{2}}
$$
and the first summation behaves like $e^{-1/2}\cdot\frac{\pi(X)}{\varphi(m)}+O(1)$
as we just found, whilst the second sum behaves like $\left(1-\frac{1}{\varphi(m)}\right)\cdot\pi(X)+O(1)$
with $X\gg 0$.
One thereby concludes that
$$
\sum_{p\in\mathbb{P},\;p\leq X} \rho\big(\#(\mathcal{O}_{\chi}/\varpi),0\big)^{\frac{p-1}{2}} \approx\;
e^{-1/2}\cdot\frac{\pi(X)}{\varphi(m)} +
\left(1-\frac{1}{\varphi(m)}\right)\cdot\pi(X)+O(1)
$$
and the result is now fully established.
\end{proof}

\begin{rems*}
(i) A straightforward consequence of Theorem \ref{TotThm} is that
$$
\lim_{X\rightarrow\infty} \left(\frac{\sum_{p\in\mathbb{P},\;p\leq X} \;\rho\big(\#(\mathcal{O}_{\chi}/\varpi),0\big)^{\frac{p-1}{2}}}{\pi(X)}\right)
\;=\;\; 1+\frac{e^{-\frac{1}{2}}-1}{\varphi(\ord(\chi))}
$$
which was the principal motivation for making Conjecture \ref{conj2} in the first place.

\smallskip
(ii) If $\chi$ is trivial or quadratic then $\varphi(\ord(\chi))=1$, hence $1+\frac{e^{-\frac{1}{2}}-1}{\varphi(\ord(\chi))} =
e^{-\frac{1}{2}}$ which is consistent with the proportion of regular primes observed in nature.

\smallskip
(iii) As Im$(\chi)$ grows larger so does $\varphi(\ord(\chi))$, thus one expects the proportion
of $\chi$-regular primes to approach $1$ and that the $\chi$-irregular primes become sparser.
\end{rems*}

\subsection{Heuristics for totally real abelian extensions}\label{Sect2.4}
Let $F$ be an arbitrary totally real number field (not necessarily an abelian extension of $\mathbb{Q}$),
and choose a prime $p\neq 2$ that is unramified in $F$. Deligne and Ribet \cite {DeRi} associated a $p$-adic $L$-function, $\zeta_{p,F}(s)$,
whose $\omega^{2j}$-branches interpolate special values of the complex Dedekind zeta-function $\zeta_F(s,\omega^{2j})$. Let $\mathcal{D}_{F,\omega^{2j}}(T)$
be the associated power series.

\begin{dfn} (i) We say that a prime $p$ is {\it $F$-regular} if $p$ is unramified in $F$ and secondly
$\lambda\big(\mathcal{D}_{F,\omega^{2j}}(T)\big)=0$ for all $j\in\{1,\dots,\frac{p-1}{2}\}$;
otherwise $p$ is {\it $F$-irregular}.

\smallskip
(ii) In particular, if one defines
$\lambda_p^{\text{\rm tot}}\big(F\big) :=
\sum_{j=1}^{\frac{p-1}{2}} \;\lambda\big(\mathcal{D}_{F,\omega^{2j}}(T)\big)\in\mathbb{Z}_{\geq 0}$
then the prime $p$ is $F$-regular if and only if  both $p\nmid\text{\rm disc}_F$ and $\lambda_p^{\text{\rm tot}}(F)$ is equal to zero.
\end{dfn}

We now study totally real abelian extensions $F/\QQ$ of order $m$. Suppose that $\text{Gal}(F/\QQ)$ is a direct sum of cyclic groups of order $m_i$
where each $i\in\{1, \dots , n\}$.
Recall from earlier that Conjecture \ref{conj3} predicted for $p\gg 0$ with $p\nmid m\cdot\text{\rm disc}_F$ that
$$
\text{\rm Prob}\big(\text{\rm $p$ is $F$-regular}\big) \;\approx\; \exp\left(\!-\frac{\prod_{i=1}^n \gcd(m_i , p-1)}{2} \right)
$$
where the probability is computed over a random field extension (of fixed order), and a randomly chosen prime number.

In the sequel, we provide some theoretical evidence for the  conjecture above.
Firstly a prime $p$ satisfying $p \nmid m\cdot\text{disc}_F$ is $F$-regular if and only if
$\lambda_p(\chi \omega^{2j}) = 0$ for all characters $\chi$ of $\text{Gal}(F/\QQ)$ and $j \in \{1, \dots, \frac{p-1}{2}\}$.
If the random matrix model is accurate for a fixed prime $p$, then the probability of $\lambda_p(\chi \omega^{2j})=0$ is given by $\rho(q,0)$ where $q = p^f=\#(\mathcal{O}_{\chi}/\varpi)$.
Assuming that the distribution of the $\lambda$-invariants for characters $\chi$ of $\text{Gal}(F/\QQ)$ in the same Galois orbit is independent\footnote{This is probably true for totally split
primes in $\QQ(\chi)$, as the decomposition group is trivial. However this is not true for inert primes,
but luckily Prob$\big(\lambda_p(\chi \omega^{2j})=0\big)$ is close to $1$ if $f>1$.} we should then have an approximation
\begin{equation}\label{ProdApprox}
\text{\rm Prob}\big(\text{$p$ is $F$-regular}\big) \;\approx\! \prod_{\substack{\chi:\text{Gal}(F/\QQ)\rightarrow\overline{\mathbb{Q}}^{\times}\!\!, \\ \#(\mathcal{O}_{\chi}/\varpi) = q}} \!\rho(q,0)^{\frac{p-1}{2}}  .
\end{equation}
We already saw from Equation (\ref{EqBnds}) that (as in the proof of Theorem \ref{TotThm}):
$$
\;\beta\big(p^f\big)^{\frac{p-1}{2}}\!\cdot \prod_{t\geq 2} \big(1-p^{-tf}\big)
\;\;<\;\; \rho\big(p^f,0\big)^{\frac{p-1}{2}}
\;\;<\;\; \beta\big(p^f\big)^{\frac{p-1}{2}}\!\cdot  \prod_{t\geq 3} \big(1-p^{-tf}\big) .
$$
Furthermore, one has $\lim_{p\rightarrow\infty} \beta\big(p^f\big)^{\frac{p-1}{2}}=1$ if $f>1$ and $\lim_{p\rightarrow\infty} \beta(p)^{\frac{p-1}{2}}=e^{-1/2}$,
in which case $\lim_{p\rightarrow\infty}\rho(p,0)^{\frac{p-1}{2}}=e^{-1/2}$
while $\lim_{p\rightarrow\infty}\rho(p^f,0)^{\frac{p-1}{2}}=1$ when $f>1$.
Thus we obtain a factor of $e^{-1/2}$ for each character $\chi$ of $\text{Gal}(F/\QQ)$ such that $\#(\mathcal{O}_{\chi}/\varpi)=p$, and the other characters yield a factor of $1$
for the product in (\ref{ProdApprox}).
Consequently, it is enough to count up the characters $\chi$ such that $\mathcal{O}_{\chi}$ has residue class degree $f=1$, i.e. those $\chi$ at which $p$ is totally split inside $\QQ(\chi) = \QQ(\mu_{\ord(\chi)}) $.

\begin{lem} Let $p$ be a prime number, and let $G$ be a cyclic group of order $m$ such that $p \nmid m$. Then there exist
precisely $\gcd(m,p-1)$ distinct characters $\chi$ of $G$ having the property
that $p$ is totally split inside the cyclotomic extension $\QQ(\chi)/\QQ$. More generally, if $G$ is a direct product of cyclic groups of order $m_i$ where each $i\in\{1, \dots , n\}$, then
the number of such characters is $\prod_{i=1}^n \gcd(m_i, p-1)$.
\end{lem}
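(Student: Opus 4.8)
The plan is to translate the arithmetic splitting condition into a purely group-theoretic counting problem, and then to count by studying the $(p-1)$-power map on the dual group. As recalled earlier, each character $\chi$ satisfies $\QQ(\chi)=\QQ(\mu_{\ord(\chi)})$, because its image is precisely the group of $\ord(\chi)$-th roots of unity. The first step is therefore to pin down exactly when $p$ is totally split in $\QQ(\mu_{d})$, where $d:=\ord(\chi)$. Since $p\nmid m$ and $d\mid m$, the prime $p$ is unramified, and the classical splitting law for cyclotomic fields says that the Frobenius at $p$ corresponds to the residue class $p\bmod d$ inside $\Gal(\QQ(\mu_d)/\QQ)\cong(\mathbb{Z}/d\mathbb{Z})^{\times}$. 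Hence $p$ splits completely in $\QQ(\chi)$ if and only if this Frobenius is trivial, i.e. if and only if $d\mid p-1$; equivalently, $p$ is totally split in $\QQ(\chi)$ precisely when $\chi^{\,p-1}=\mathbf{1}$.

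Once this dictionary is in place, the cyclic case is immediate. The dual group $\widehat{G}$ is again cyclic of order $m$, and the characters we wish to count are exactly the elements of the kernel of the homomorphism $\chi\mapsto\chi^{\,p-1}$ on $\widehat{G}$. In a cyclic group of order $m$ the number of solutions to $x^{\,p-1}=1$ equals $\gcd(m,p-1)$: writing $\widehat{G}=\langle\psi\rangle$, one has $\psi^{\,j(p-1)}=1$ iff $m\mid j(p-1)$ iff $\tfrac{m}{\gcd(m,p-1)}\mid j$, and there are precisely $\gcd(m,p-1)$ such residues $j$ modulo $m$. This settles the first assertion.

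For the general statement, I would decompose $G=\prod_{i=1}^{n}G_i$ with $G_i$ cyclic of order $m_i$, so that $\widehat{G}\cong\prod_{i=1}^{n}\widehat{G_i}$ and every character factors as $\chi=(\chi_1,\dots,\chi_n)$. The key observation is that the splitting condition factors through the components: since $\chi^{\,p-1}=\mathbf{1}$ holds if and only if each $\chi_i^{\,p-1}=\mathbf{1}$, the set we are counting is the direct product over $i$ of the corresponding sets for the cyclic factors. Applying the cyclic case to each component and multiplying then yields the count $\prod_{i=1}^{n}\gcd(m_i,p-1)$, as claimed.

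The argument is essentially routine, and the only point requiring care is this last reduction. One must verify that the condition ``$p$ totally split in $\QQ(\chi)$'' genuinely factors coordinate-wise, rather than through the single field $\QQ(\chi)=\QQ(\mu_{\ord(\chi)})$, whose degree involves $\operatorname{lcm}_i\ord(\chi_i)$ and not the product of the component orders. This is exactly why phrasing the criterion as membership in the kernel of the $(p-1)$-power map, namely $\chi^{\,p-1}=\mathbf{1}$, is preferable to reasoning directly with $\ord(\chi)$: kernel membership is manifestly componentwise, whereas the divisibility $\operatorname{lcm}_i\ord(\chi_i)\mid p-1$ must be observed to hold if and only if every $\ord(\chi_i)\mid p-1$.
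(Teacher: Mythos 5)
Your argument is correct and follows essentially the same route as the paper: both reduce the splitting condition to $\ord(\chi)\mid p-1$ via the cyclotomic splitting law, count $\gcd(m,p-1)$ such characters in the cyclic case (the paper describes this set as the cyclic subgroup generated by a character of maximal admissible order, which is exactly your kernel of the $(p-1)$-power map), and pass to the general case by noting that $\operatorname{lcm}_i\ord(\chi_i)\mid p-1$ holds iff each $\ord(\chi_i)\mid p-1$. Your kernel formulation is a slightly cleaner packaging of the same componentwise reduction.
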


\begin{proof}
The prime $p$ totally splits in $\QQ(\mu_N) / \QQ$ if and only if $p-1 \equiv 0 \;(\!\!\!\!\mod N)$. For each character $\chi$ of a cyclic group of order $m$ clearly one has $\ord(\chi) \big| m$, and the largest
possible order of a character at which $p$ is totally split is $\gcd(m,p-1)$. Indeed such a character generates the (cyclic subgroup of) characters $\chi$ at which $p$ totally splits
in $\QQ(\chi)$.
We can decompose a character $\chi$ of a finite abelian group $G$ into a product $\chi_1  \cdots \chi_n$, where the $\chi_i$'s are characters on the cyclic components. We have $\ord(\chi) = \text{lcm}( \ord(\chi_1), \dots , \ord(\chi_n)) = N$ say: by the Chinese remainder theorem and its generalisation to non-coprime moduli,
$$ p-1  \equiv 0 \;(\!\!\!\!\!\mod N) \;\Longleftrightarrow\;  p-1 \equiv 0 \;\big(\!\!\!\!\!\mod \ord(\chi_i)\big) \;\text{ for all } i\in\{1, \dots, n\} .$$
There are $\gcd(m_i, p-1)$  distinct characters $\chi_i$ such that $p-1 \equiv 0 \;\big(\!\!\!\!\mod \ord(\chi_i)\big)$, which completes
the proof of the lemma.
\end{proof}

\begin{exm*} Suppose that $F/\QQ$ denotes a totally real cyclic extension of order $l^n$ where $l \neq p$ is a prime. If $n(p):=\min\{\text{\rm ord}_l(p-1),n\}$ then $p$ splits completely in $\mathbb{Q}(\mu_{l^{n(p)}})$ i.e.
$(p)= \mathfrak{p}_1\cdots \mathfrak{p}_{\varphi(l^{n(p)})}$, and moreover
the prime ideals $\mathfrak{p}_j$ are inert in the extension $\mathbb{Q}(\mu_{l^n})\big/\mathbb{Q}(\mu_{l^{n(p)}})$ for
each $j$. We must then have $\gcd(l^n,p-1)=l^{n(p)}$, and hence
$$
\text{\rm Prob}\big(\text{$p$ is $F$-regular}\big) \;\approx\; e^{-\frac{l^{n(p)}}{2}}  .
$$
It follows directly that the probability that $p$ is $F$-regular stabilises in any tower of extensions of $l$-power order. For instance if $l \nmid p-1$ then this probability is $e^{-1/2}$, the same as for the classical case $F=\QQ$.
\end{exm*}

%%%%%%%%%%%%%%%%%%%%%%%%%%%%%%%%%%%%%
\section{The general method and a summary of the data}\label{Sect3}
\noindent
We compile some evidence supporting Conjectures \ref{conj1}-\ref{conj3} by computing $\lambda_p(\chi)$ for a large number of characters $\chi$ of a given order, as well as their Teichm\"uller twists.
Let us make a few general comments. Firstly, the SageMath class {\ttfamily DirichletGroup} was used to find the distinct Dirichlet characters $\chi$ of a given conductor and order. In our
numerical experiments, we usually looked at characters $\theta$ with $\cond(\theta)<10^5$ and $\gcd\big(p,\cond(\theta)\big)=1$, and at all Teichm\"uller twists $\chi= \theta \omega^i$ so that $\chi$ was
even. To compute $\lambda_p(\chi)$ itself we employed two different methods, as we will now outline.

\smallskip
{\bf Method I - Interpolation of Bernoulli numbers.}
One commences by choosing $15$ separate values $n=i+k(p-1)$  of the power series $\mathcal{G}_{\theta \omega^i}(T) \in \mathcal{O}_{\chi}[\![T]\!]$. Applying
Equation (\ref{InterpFormula}), we immediately discover that
$$
\mathcal{G}_{\theta \omega^i}\big((1+p)^{n-1} - 1\big)  =\;   - \big( 1 - \chi(p) p^{n-1} \big) \times \frac{B_{n,\theta}}{n}
\quad\text{for positive $n \equiv i \;(\!\!\!\!\!\mod p-1)$. }
$$
An interpolation of these 15 values produces a Lagrange polynomial in $\QQ_p(\chi)\big[T\big]$,
and to calculate $\lambda_p(\chi)$ we need only look at its first few coefficients modulo $\varpi$.

As is described at length in \cite[Section 5.2]{EJV} for the ring $\mathcal{O}_{\psi} = \Zp$,
the difference between the $t$-th coefficient of the power series and the interpolated polynomial lies in $p^{C-t} \Zp$ where $C$ denotes the number of values used for the interpolation.
The statement easily extends to the valuation ring $\mathcal{O}_{\chi}$, and so we can compute $\lambda$-invariants $<C=15$ if we work at the required $p$-adic precision (see also \cite{ErMe}).

The numbers $B_{n,\theta}$ themselves are obtained by calling the SageMath function {\ttfamily bernoulli} on a {\ttfamily DirichletGroup} object.
The generalised Bernoulli numbers are elements of the global field $\QQ(\zeta_{\ord(\chi)})$, which we need to embed into a $p$-adic field. To this end, we exploit SageMath's implementation
{\ttfamily Qq} of field extensions of $\QQ_p$: we compute the extension $K=\QQ_p(\chi)$, so that $B_{n,\theta}$ is
sent to an element in $K$.

\smallskip
{\bf Method II - $p$-adic Dirichlet series expansions.}
Over the last fifteen years or so, it was found by various authors that the Kubota-Leopoldt zeta-function exhibits a nice expansion as a non-archimedean Dirichlet series \cite{De1,De2,DeQi,KnWa,Zh}.
For $x\in\mathbb{Z}_p^{\times}$ we write $\langle x\rangle=x\cdot\omega(x)^{-1}\in 1+p\mathbb{Z}_p$ for its image in the principal units.
If $\mathcal{G}_{\chi,\beta}^{\natural}(T)\in\mathcal{O}_{\chi}[\![T]\!]$ is the power series corresponding to the Iwasawa function
$$
\big(2\omega^{\beta}(2)\langle 2\rangle^{-s}-1\big)\cdot \mathbf{L}_{p}\big(s,\chi\omega^{1+\beta}\big)
$$
and assuming the prime $p\nmid \cond(\chi)$, then by \cite[Theorem 1]{DeQi} there are congruences
$$
\mathcal{G}_{\chi,\beta}^{\natural}(T) \;\equiv\;\; \sum_{j=0}^{p^N} \; c_{j}^{(N)}\!\big(\mathcal{G}_{\chi,\beta}^{\natural}\big)\cdot T^j  \mod \mathcal{J}_N
\quad\text{for all integers $N\geq 1$,}
$$
where $\mathcal{J}_N$ denotes the $\mathcal{O}_{\chi}[\![T]\!]$-ideal $\prod_{j=1}^{N}\big(T^{p^{j-1}}\!,p\big)
=\big(T,p\big)\cdot \big(T^p,p\big)\dots \big(T^{p^{N-1}},p\big)$.
These $p$-adic coefficients $c_{j}^{(N)}$ can be easily calculated via Equation (2) of {\it op. cit.} and individually require roughly
$N\cdot \log(p)\cdot p^N$ operations to work out each time.
Also, the ideals $\{\mathcal{J}_N\}$ form a decreasing set of neighborhoods of zero as $N\rightarrow\infty$.
Put $N=2$: by Proposition 1  of {\it op. cit.}, if $\text{\rm ord}_p\big(c_{j}^{(2)}\!(\mathcal{G}_{\chi,\beta}^{\natural}) \big)=0$ for some $j\leq p$ then
$$
\lambda_p(\chi\omega^{1+\beta}) \;=\; \min\Big\{j\geq 0\;\Big| \; \text{\rm ord}_p\big(c_{j}^{(2)}\!(\mathcal{G}_{\chi,\beta}^{\natural}) \big)=0\Big\}
-  \begin{cases}
1 & \text{if $\beta\equiv -1\mod p-1$} \\
0 & \text{if $\beta\not\equiv -1\mod p-1$.}
\end{cases}
$$
Provided the $\lambda$-invariant does not exceed $p$ (which is most often the case anyway),
the above formula provides a simple and efficient method of determining $\lambda_p(\chi)$ using approximately $O\big(\log(p)\cdot p^2)$ arithmetic operations.

\smallskip
Alternatively,  one may instead use a complementary expansion derived by the second author and Washington in \cite{KnWa}, namely
$$
-\big(1- \theta\omega^i (2) \langle 2 \rangle^{1-s}\big) \cdot \mathbf{L}_p(s, \theta \omega^i) \;=\;\; \sum^F_{\substack{a=1 \\ p\, \nmid\, a}}  \frac{(-1)^a}{2}\,  \theta \omega^{i-1}(a) \langle a \rangle^{-s} + \delta(F/p^2)
$$
where $s \in \Cp$ with $|s|<p^{(p-2)/(p-1)}$, $F$ is any chosen multiple of $p \cdot \cond(\theta)$, and one has $|\delta(F/p^2)| \leq |F/p^2|$. The regularisation at $c=2$ works best for odd characters $\theta$, but other choices for $c$ are also possible.
If we set $T =(1+p d)^s - 1$ with $d=\cond(\theta)$ then we have $s = \frac{\log_p(1+T)}{\log_p(1+p d)}$, and thereby define
the quantity $l(x) = - \frac{ \log_p\langle x \rangle } { \log_p(1+pd)}$ for
all $x \in \Zp^{\times}$.
The above Dirichlet series expansion yields for $T \in \Cp$ with $|T|<p^{-1/(p-1)}$:
\begin{eqnarray*}
 - \big(1 - \theta \omega^i (2) \langle 2 \rangle (1+T)^{l(2)}\big)\cdot   \mathcal{G}_{\theta \omega^i}(T) \;=\; \sum^F_{\substack{a=1 \\ p\, \nmid\, a}}  \frac{(-1)^a}{2} \, \theta \omega^{i-1}(a) (1+T)^{l(a)}  + \delta(F/p^2) \; & & \\
= \;\sum_{j=0}^{\infty} \left( \sum^F_{\substack{a=1 \\ p\, \nmid\, a}}  \frac{(-1)^a}{2}\, \theta \omega^{i-1}(a)  \binom{l(a)}{j} \right) T^j + \delta(F/p^2)  . & &
\end{eqnarray*}
If we put $F= d \cdot p^N$ with $N$ large enough, the inner bracket gives an easily computable formula for the initial coefficients of the power series $\mathcal{G}_{\theta \omega^i}(T)$(multiplied with the regularisation factor), from which we
readily obtain the value of $\lambda_p(\theta \omega^i)$.

\begin{rems*} (i) In practice we found that Method II worked perfectly well if both $p$ and $\cond(\chi)$ were not too big, but
Method I was quicker in the long term. The computations for Conjectures \ref{conj1}, \ref{conj3} and the tables of $\lambda$-invariants were mainly carried out with Method I, while Method II was used for random checks. Regarding the proportion of $\chi$-regular primes (Conjecture \ref{conj2}), it suffices to check whether the associated $\lambda_p$-invariants are zero, which can be done using the $p$-adic valuation of Bernoulli numbers. 

\smallskip
(ii)  To apply Method I for a fixed $\chi$, one simply generates a vast array of Bernoulli numbers (once and for all), and then varying the prime $p$ is not an issue.

\smallskip
(iii)  In work in progress of the authors and Luochen Zhao, using \cite{Ro,Zh} we treat certain totally real fields $F$ which are non-abelian extensions of $\QQ$, and study the
proportion of $F$-regular primes ({\it N.B.} this is {\it not} covered by Conjecture \ref{conj3}).

\smallskip
(iv) The SageMath code which implements Methods I and II is freely available from the website
\url{https://github.com/knospe/iwasawa}
\end{rems*}

%\newpage

\subsection{Numerical evidence for Conjecture \ref{conj1}} \label{Sect3.1}
We have computed the distribution of $\lambda$-invariants for all primitive even characters $\chi = \theta \omega^i$
satisfying $\gcd\!\big(\cond(\theta),p\big)\!=1$ and $\cond(\theta)<10000$, where we fix both $p$ and the order of $\chi$.
The tables contain the predicted and the computed proportion for $N$ characters and all relevant Teichm\"uller twists $i$
(we omit the case of a trivial zero where $\lambda \geq 1$).
 These computed numbers agree with the prediction, except for some mysterious larger deviation for the combination $p=3$ and $\ord(\chi)=2$. The proportion with $\lambda=0$ is systematically slightly underestimated.  For quadratic characters with large conductors, the computations in \cite{EJV,KrWa}  indicate convergence towards the predicted value. A possible explanation for this discrepancy might also be given by Garton's formula $\prod_{i=1}^{\infty} (1+p^{-i})^{-1}$ for the generalized symplectic case when the multiplier is congruent to $1$ mod $p$ (see \cite[Corollary 5.2.1]{Gar}); for $p=3$ this yields a probability of $0.639$.

\bigskip
%\newpage
\noindent $p=3$, $\ord(\chi)=2$ (even and odd quadratic characters), $f=1$

\noindent \begin{tabular}{|l|l|l|l|l|l|l|l|l|l|} \hline
$i$ & $N$ & $0$ & $1$ & $2$ & $3$ & $4$ & $5$ & $6$ & $7$ \\ \hline
  & pred. & $0.5601$ & $0.2801$ & $0.1050$ & $0.0364$ & $0.0123$ & $0.0041$ & $0.0014$ & $0.0005$ \\ \hline
$0$& $2280$ & $0.6461$ & $0.2439$ & $0.0798$ & $0.0219$ & $0.0066$ & $0.0013$ & $0.0004$ & $0.0000$  \\ \hline
$1$ & $1140$ & $0.6544$ & $0.2412$ & $0.0702$ & $0.0228$ & $0.0079$ & $0.0026$ & $0.0009$ & $0.0000$ \\ \hline
\end{tabular} \\

\medskip
\noindent $p=5$, $\ord(\chi)=2$ (even and odd quadratic characters), $f=1$

\noindent \begin{tabular}{|l|l|l|l|l|l|l|l|l|l|} \hline
$i$ & $N$ & $0$ & $1$ & $2$ & $3$ & $4$ & $5$ & $6$ & $7$ \\ \hline
  & pred. & $0.7603$ & $0.1901$ & $0.0396$ & $0.0080$ & $0.0016$ & $0.0003$ & $0.0001$ & $0.0000$ \\ \hline
$0$ &  $2535$ & $0.7854$ & $0.1732$ & $0.0316$ & $0.0087$ & $0.0012$ & $0.0000$ & $0.0000$ & $0.0000$ \\ \hline
$1$ & $1267$ & $0.8082$ & $0.1650$ & $0.0205$ & $0.0063$ & $0.0000$ & $0.0000$ & $0.0000$ & $0.0000$ \\ \hline
$2$ & $2535$ & $0.7953$ & $0.1692$ & $0.0292$ & $0.0051$ & $0.0012$ & $0.0000$ & $0.0000$ & $0.0000$   \\ \hline
$3$ & $2534$ & $0.8062$ & $0.1539$ & $0.0328$ & $0.0059$ & $0.0012$ & $0.0000$ & $0.0000$ & $0.0000$ \\ \hline
\end{tabular}\\

\medskip
\noindent $p=5$, $\ord(\chi)=3$ (even cubic characters), $f=2$

\noindent \begin{tabular}{|l|l|l|l|l|l|l|l|l|l|} \hline
$i$ & $N$ & $0$ & $1$ & $2$ & $3$ & $4$ & $5$ & $6$ & $7$ \\ \hline
& pred. & $0.9584$ & $0.0399$ & $0.0016$ & $0.0001$ & $0.0000$ & $0.0000$ & $0.0000$ & $0.0000$ \\ \hline
$0$ & $3184$ & $0.9579$ & $0.0408$ & $0.0013$ & $0.0000$ & $0.0000$ & $0.0000$ & $0.0000$ & $0.0000$ \\ \hline
$2$ & $3184$ & $0.9579$ & $0.0415$ & $0.0006$ & $0.0000$ & $0.0000$ & $0.0000$ & $0.0000$ & $0.0000$ \\ \hline

\end{tabular}\\

\medskip
\noindent $p=7$, $\ord(\chi)=3$ (even cubic characters), $f=1$

\noindent \begin{tabular}{|l|l|l|l|l|l|l|l|l|l|} \hline
$i$ &  $N$ & $0$ & $1$ & $2$ & $3$ & $4$ & $5$ & $6$ & $7$ \\ \hline
& pred. & $0.8368$ & $0.1395$ & $0.0203$ & $0.0029$ & $0.0004$ & $0.0001$ & $0.0000$ & $0.0000$ \\ \hline
$0$ & $2470$ & $0.8336$ & $0.1368$ & $0.0263$ & $0.0028$ & $0.0004$ & $0.0000$ & $0.0000$ & $0.0000$  \\ \hline
$2$ & $2470$ & $0.8490$ & $0.1283$ & $0.0190$ & $0.0032$ & $0.0004$ & $0.0000$ & $0.0000$ & $0.0000$  \\ \hline
$4$ & $2470$ & $0.8559$ & $0.1219$ & $0.0202$ & $0.0020$ & $0.0000$ & $0.0000$ & $0.0000$ & $0.0000$ \\ \hline
\end{tabular} \\

\medskip
\noindent $p=11$, $\ord(\chi)=3$ (even cubic characters), $f=2$

\noindent \begin{tabular}{|l|l|l|l|l|l|l|l|l|l|} \hline
$i$ & $N$ & $0$ & $1$ & $2$ & $3$ & $4$ & $5$ & $6$ & $7$ \\ \hline
& pred. & $0.9917$ & $0.0083$ & $0.0001$ & $0.0000$ & $0.0000$ & $0.0000$ & $0.0000$ & $0.0000$ \\ \hline
$0$ & $3184$ & $0.9900$ & $0.0101$ & $0.0000$ & $0.0000$ & $0.0000$ & $0.0000$ & $0.0000$ & $0.0000$ \\ \hline
$2$ & $3184$ & $0.9956$ & $0.0044$ & $0.0000$ & $0.0000$ & $0.0000$ & $0.0000$ & $0.0000$ & $0.0000$ \\ \hline
$4$ & $3184$ & $0.9912$ & $0.0088$ & $0.0000$ & $0.0000$ & $0.0000$ & $0.0000$ & $0.0000$ & $0.0000$ \\ \hline
$6$ & $3184$ & $0.9931$ & $0.0069$ & $0.0000$ & $0.0000$ & $0.0000$ & $0.0000$ & $0.0000$ & $0.0000$ \\ \hline
$8$ & $3184$ & $0.9962$ & $0.0038$ & $0.0000$ & $0.0000$ & $0.0000$ & $0.0000$ & $0.0000$ & $0.0000$ \\ \hline

\end{tabular}\\

\newpage
%\medskip
\noindent $p=13$, $\ord(\chi)=3$ (even cubic characters), $f=1$

\noindent \begin{tabular}{|l|l|l|l|l|l|l|l|l|l|} \hline
$i$ &  $N$ & $0$ & $1$ & $2$ & $3$ & $4$ & $5$ & $6$ & $7$ \\ \hline
 & pred. & $0.9172$ & $0.0764$ & $0.0059$ & $0.0005$ & $0.0000$ & $0.0000$ & $0.0000$ & $0.0000$ \\ \hline
$0$ & $2742$ & $0.9168$ & $0.0762$ & $0.0069$ & $0.0000$ & $0.0000$ & $0.0000$ & $0.0000$ & $0.0000$ \\ \hline
$2$ & $2742$ & $0.9172$ & $0.0766$ & $0.0055$ & $0.0007$ & $0.0000$ & $0.0000$ & $0.0000$ & $0.0000$ \\ \hline
$4$ & $2742$ & $0.9209$ & $0.0715$ & $0.0069$ & $0.0007$ & $0.0000$ & $0.0000$ & $0.0000$ & $0.0000$ \\ \hline
$6$ & $2742$ & $0.9263$ & $0.0686$ & $0.0047$ & $0.0004$ & $0.0000$ & $0.0000$ & $0.0000$ & $0.0000$ \\ \hline
$8$ & $2742$ & $0.9238$ & $0.0697$ & $0.0058$ & $0.0007$ & $0.0000$ & $0.0000$ & $0.0000$ & $0.0000$ \\ \hline
$10$ & $2742$ & $0.9154$ & $0.0806$ & $0.0029$ & $0.0011$ & $0.0000$ & $0.0000$ & $0.0000$ & $0.0000$  \\ \hline
\end{tabular}\\

\smallskip
\noindent $p=5$, $\ord(\chi)=4$ (even and odd quartic characters), $f=1$

\noindent \begin{tabular}{|l|l|l|l|l|l|l|l|l|l|} \hline
$i$ & $N$ & $0$ & $1$ & $2$ & $3$ & $4$ & $5$ & $6$ & $7$ \\ \hline
  & pred. & $0.7603$ & $0.1901$ & $0.0396$ & $0.0080$ & $0.0016$ & $0.0003$ & $0.0001$ & $0.0000$ \\ \hline
$0$ & $5038$ &  $0.7848$ & $0.1757$ & $0.0308$ & $0.0073$ & $0.0014$ & $0.0000$ & $0.0000$ & $0.0000$ \\ \hline
$1$ & $4018$ & $0.7867$ & $0.1737$ & $0.0299$ & $0.0075$ & $0.0017$ & $0.0005$ & $0.0000$ & $0.0000$  \\ \hline
$2$ & $5038$ & $0.7823$ & $0.1739$ & $0.0357$ & $0.0058$ & $0.0022$ & $0.0000$ & $0.0000$ & $0.0002$  \\ \hline
$3$ & $5056$ & $0.7777$ & $0.1719$ & $0.0396$ & $0.0089$ & $0.0020$ & $0.0000$ & $0.0000$ & $0.0000$  \\ \hline
\end{tabular}\\

\smallskip
\noindent $p=11$, $\ord(\chi)=5$ (even quintic characters), $f=1$

\noindent \begin{tabular}{|l|l|l|l|l|l|l|l|l|l|} \hline
$i$ & $N$ & $0$ & $1$ & $2$ & $3$ & $4$ & $5$ & $6$ & $7$ \\ \hline
& pred. & $0.9008$ & $0.0901$ & $0.0083$ & $0.0008$ & $0.0001$ & $0.0000$ & $0.0000$ & $0.0000$ \\ \hline
$0$ & $1944$ & $0.9059$ & $0.0875$ & $0.0062$ & $0.0005$ & $0.0000$ & $0.0000$ & $0.0000$ & $0.0000$  \\ \hline
$2$ & $1944$ & $0.8935$ & $0.0972$ & $0.0077$ & $0.0010$ & $0.0005$ & $0.0000$ & $0.0000$ & $0.0000$ \\ \hline
$4$ & $1944$ & $0.8997$ & $0.0911$ & $0.0082$ & $0.0005$ & $0.0005$ & $0.0000$ & $0.0000$ & $0.0000$ \\ \hline
$6$ & $1944$ & $0.9012$ & $0.0844$ & $0.0129$ & $0.0015$ & $0.0000$ & $0.0000$ & $0.0000$ & $0.0000$ \\ \hline
$8$ & $1944$ & $0.9105$ & $0.0813$ & $0.0072$ & $0.0010$ & $0.0000$ & $0.0000$ & $0.0000$ & $0.0000$ \\ \hline
\end{tabular}\\

\smallskip
\noindent $p=3$, $\ord(\chi)=8$ (even and odd octal characters), $f=2$

\noindent \begin{tabular}{|l|l|l|l|l|l|l|l|l|l|} \hline
$i$ & $N$ & $0$ & $1$ & $2$ & $3$ & $4$ & $5$ & $6$ & $7$ \\ \hline
& pred. & $0.8766$ & $0.1096$ & $0.0123$ & $0.0014$ & $0.0002$ & $0.0000$ & $0.0000$ & $0.0000$ \\ \hline
$0$ & $7264$ & $0.8945$ & $0.0950$ & $0.0080$ & $0.0017$ & $0.0006$ & $0.0000$ & $0.0003$ & $0.0000$  \\ \hline
$1$ & $6656$ & $0.8846$ & $0.1019$ & $0.0126$ & $0.0009$ & $0.0000$ & $0.0000$ & $0.0000$ & $0.0000$  \\ \hline
\end{tabular}

\medskip

\subsection{Numerical evidence for Conjecture \ref{conj2}}\label{Sect3.2}
We computed the Bernoulli numbers $B_{n,\chi}$ for all primitive $\chi$ with $1<\cond(\chi)<1000$ and fixed order $\ord(\chi)$. We checked if $p$ is $\chi$-regular for the first $100$
odd primes with $p^f \equiv 1 \;\big(\!\!\!\mod \ord(\chi)\big)$ where $f$ is fixed; the conjecture states that the proportion of regular primes is $e^{-1/2} \approx 0.6065$ if $f=1$,
while for $f > 1$ the proportion should be very close to $1$.
The Kummer congruences imply that modulo $p$,
$$
 \mathbf{L}_p(0, \chi \omega^n) = - \big(1- \chi \omega^{n-1} (p)\big) B_{1, \chi \omega^{n-1}}  \;\equiv\;
- \big(1- \chi(p) p^{n-1}\big) \frac{B_{n, \chi}}{n} = \mathbf{L}_p(1-n, \chi \omega^n)
$$
for all integers $n\geq 1$.
In order to compute $\mathbf{L}_p(0, \chi \omega^n) \mod p $ for all Teichm\"uller twists such that $\chi \omega^n$ is even, it suffices to check the untwisted Bernoulli numbers $B_{n,\chi}$
mod $p$ where $n\in\{2, \dots, p-1\}$ if $\chi$ is even, and $n\in\{1, \dots, p-2\}$ if $\chi$ is odd.

\begin{rem*}
We observe that $\big(1- \chi(p) p^{n-1} \big) \not\equiv 0 \mod \varpi$ for all integers $n \geq 2$, however for $n=1$ we always obtain a trivial zero if $\chi(p)=1$.
Hence we test regularity using Bernoulli numbers $B_{n,\chi} \mod \varpi$ for all cases (including $n=1$), which is in line with the definition of generalized (irr-)regular primes
given in  \cite{Er}.
\end{rem*}

%\vspace{-1mm}

The tables below contain the number of evaluated characters, $N$ say, and the average proportion of regular primes we found experimentally. The first line of each table indicates the expected proportion.
They also include the standard deviation, $\sigma$, as well as the minimum/maximum proportion for the corresponding data set.

\medskip
\noindent $\ord(\chi)=2$ (even and odd quadratic characters), $f=1$

\noindent \begin{tabular}{|c|c|c|c|c|}   \hline
$N$ & proportion & standard dev. & min. value & max. value \\   \hline
pred. & $  0.6065$ & - & -& -\\ \hline
$607$ & $0.6086$ & $0.0486$ & $0.48$ & $0.75$ \\ \hline
\end{tabular}\\

\medskip
\noindent $\ord(\chi)=3$ (even cubic characters), primes with $f=1$

\noindent \begin{tabular}{|c|c|c|c|c|}   \hline
$N$ & proportion & standard dev. & min. value & max. value \\   \hline
pred. & $  0.6065$ & - & - & - \\ \hline
$318$ & $0.6108$ & $0.0459$ & $0.48$ & $0.77$ \\ \hline
\end{tabular}\\

\medskip
\noindent $\ord(\chi)=3$ (even cubic characters), primes with $f=2$

\noindent \begin{tabular}{|c|c|c|c|c|}   \hline
$N$ & proportion & standard dev. & min. value & max. value \\   \hline
pred. & $  1$ & - & - & -\\ \hline
$318$ & $0.9967$ & $0.0058$ & $0.97$ & $1.00$ \\ \hline
\end{tabular}\\

\medskip
\noindent $\ord(\chi)=4$ (even and odd quartic characters), primes with $f=1$

\noindent \begin{tabular}{|c|c|c|c|c|}   \hline
$N$ & proportion & standard dev. & min. value & max. value \\   \hline
pred. & $  0.6065$ & - & - & - \\ \hline
$1242$ & $0.6065$ & $0.0485$ & $0.46$ & $0.78$ \\ \hline
\end{tabular}\\

\medskip
\noindent $\ord(\chi)=4$ (even and odd quartic characters), primes with $f=2$

\noindent \begin{tabular}{|c|c|c|c|c|}   \hline
$N$ & proportion & standard dev. & min. value & max. value \\   \hline
pred. & $  1$ & - & - & -\\ \hline
$1242$ & $0.9961$ & $0.0062$ & $0.97$ & $1.00$ \\ \hline
\end{tabular}

\medskip
\subsection{Numerical evidence for Conjecture \ref{conj3}}\label{Sect3.3}
Lastly for totally real cyclic extensions $F/\QQ$ of conductor $<10^5$, we computed the $\lambda_p(\chi)$-invariants for $\chi = \theta \omega^i$ where $\theta$ is a
character factoring through Gal$(F/\QQ)$, and $\omega^i$ is an even power of the Teichm\"uller character.  We next determined the invariant
$\lambda_p^{\text{\rm tot}}(F)=\sum_{\theta,i}\lambda_p(\theta \omega^i)$
by summing over even characters $\theta\omega^i$ factoring through the group Gal$\big(F(\mu_p)/\QQ\big)$.

For small primes $p<1000$, it is known \cite{ChGo,ErMe} that the $\lambda$-invariants of $\QQ(\mu_p)$ are zero more often than generally predicted.
In fact prime numbers $p<37$ are always regular, and so here the modified prediction for $F$-regular primes should be
$$
\text{\rm Prob}\big(\text{\rm $p$ is $F$-regular}\big) \approx\;
\exp\left(\!-\frac{\gcd(m,p-1)-1}{2} \right) \quad\!\text{if the prime $p$ is $\QQ$-regular.}
$$
For instance, if $m=[F:\QQ]=2$ we should expect the proportion $e^{-1/2} \approx 0.6065$,
and if $m=[F:\QQ]=3$ and $p \equiv 1 \;(\!\!\!\!\mod 3)$ we should instead expect $e^{-1} \approx 0.3679$. If $m=[F:\QQ]=3$ and $p \equiv 2 \;(\!\!\!\!\mod 3)$ we expect a proportion close to $1$.

\medskip
The tables shown below give the number $N$ of totally real fields $F$ that we checked, as well as the distribution of the $\lambda_p^{\text{\rm tot}}(F)$-invariant
averaged over these sample fields.

\medskip
\noindent $p=5$, $[F:\QQ]=2$, $f=1$ \\
\noindent \begin{tabular}{|l|l|l|l|l|l|l|l|l|}
\hline
$N$ & $0$ & $1$ & $2$ & $3$ & $4$ & $5$ & $6$ & $7$ \\ \hline
$2535$ & $0.6252$ & $0.2694$ & $0.0781$ & $0.0209$ & $0.0051$ & $0.0012$ & $0.0000$ & $0.0000$   \\ \hline
 \end{tabular} \\

\medskip
\noindent $p=7$, $[F:\QQ]=2$, $f=1$ \\
\noindent \begin{tabular}{|l|l|l|l|l|l|l|l|l|}
\hline
$N$ & $0$ & $1$ & $2$ & $3$ & $4$ & $5$ & $6$ & $7$ \\ \hline
$2662$ & $0.6078$ & $0.2960$ & $0.0770$ & $0.0162$ & $0.0023$ & $0.0004$ & $0.0000$ & $0.0000$ \\ \hline
 \end{tabular} \\

\medskip
\noindent $p=11$, $[F:\QQ]=2$, $f=1$ \\
\noindent \begin{tabular}{|l|l|l|l|l|l|l|l|l|}
\hline
  $N$ & $0$ & $1$ & $2$ & $3$ & $4$ & $5$ & $6$ & $7$ \\ \hline
$2786$ & $0.6027$ & $0.2893$ & $0.0876$ & $0.0183$ & $0.0022$ & $0.0000$ & $0.0000$ & $0.0000$  \\ \hline
 \end{tabular} \\

\medskip
\noindent $p=13$, $[F:\QQ]=2$, $f=1$ \\
\noindent \begin{tabular}{|l|l|l|l|l|l|l|l|l|}
\hline
  $N$ & $0$ & $1$ & $2$ & $3$ & $4$ & $5$ & $6$ & $7$ \\ \hline
$2825$ & $0.6000$ & $0.2988$ & $0.0835$ & $0.0149$ & $0.0028$ & $0.0000$ & $0.0000$ & $0.0000$  \\ \hline
 \end{tabular} \\

\medskip
\noindent $p=5$, $[F:\QQ]=3$, $f=2$ \\
\noindent \begin{tabular}{|l|l|l|l|l|l|l|l|l|}
\hline
$N$ & $0$ & $1$ & $2$ & $3$ & $4$ & $5$ & $6$ & $7$ \\ \hline
$1592$ & $0.9171$ & $0.0000$ & $0.0798$ & $0.0000$ & $0.0031$ & $0.0000$ & $0.0000$ & $0.0000$ \\ \hline
 \end{tabular} \\

\medskip
\noindent $p=7$, $[F:\QQ]=3$, $f=1$ \\
\noindent \begin{tabular}{|l|l|l|l|l|l|l|l|l|}
\hline
$N$ & $0$ & $1$ & $2$ & $3$ & $4$ & $5$ & $6$ & $7$ \\ \hline
 $1235$ & $0.3595$ & $0.3482$ & $0.1870$ & $0.0696$ & $0.0219$ & $0.0097$ & $0.0040$ & $0.0000$ \\ \hline
  \end{tabular} \\

\medskip
\noindent $p=11$, $[F:\QQ]=3$, $f=2$ \\
\noindent \begin{tabular}{|l|l|l|l|l|l|l|l|l|}
\hline
$N$ & $0$ & $1$ & $2$ & $3$ & $4$ & $5$ & $6$ & $7$ \\ \hline
$1592$ & $0.9667$ & $0.0000$ & $0.0327$ & $0.0000$ & $0.0006$ & $0.0000$ & $0.0000$ & $0.0000$ \\ \hline
 \end{tabular} \\

\medskip
\noindent $p=13$, $[F:\QQ]=3$, $f=1$ \\
\noindent \begin{tabular}{|l|l|l|l|l|l|l|l|l|}
\hline
$N$ & $0$ & $1$ & $2$ & $3$ & $4$ & $5$ & $6$ & $7$ \\ \hline
$1371$ & $0.3669$ & $0.3538$ & $0.1875$ & $0.0657$ & $0.0190$ & $0.0058$ & $0.0015$ & $0.0000$ \\ \hline
 \end{tabular} \\

\bigskip
{\it Acknowledgements.} The authors warmly thank Larry Washington, Qin Chao and Luochen Zhao for their helpful comments
and suggestions related to this paper. Furthermore, we thank the reviewers for their comments.
The numerical experiments were done using SageMath, developed by William Stein.

%%%%%%%%%%%%%%%%%%%%%%%%%%%%%%%%%%%%%
\bibliographystyle{amsplain}

\begin{thebibliography}{n}

\bibitem{Ap} Tom Apostol, {\it Introduction to analytic number theory}, Springer Undergraduate Texts in Mathematics {\bf 7}, 1st edition (1976).

\bibitem{ChHu} Gilyoung Cheong and Yifeng Huang, {\it Cohen–Lenstra distributions via random matrices over complete discrete valuation rings
with finite residue fields}, Illinois J. Math. {\bf 65} (2021), 385-415.

\bibitem{ChGo} Nancy Childress and Robert Gold, {\it Zeros of $p$-adic $L$-functions}, Acta Arith. {\bf 48} (1987), 63-71.

\bibitem{De1} Daniel Delbourgo, \emph{A Dirichlet series expansion for the $p$-adic zeta-function}, Journal of the Australian Math. Society \textbf{81} (2006), 215-224.

\bibitem{De2} Daniel Delbourgo, \emph{The convergence of Euler products over $p$-adic number fields}, Proceedings of the Edinburgh Math. Society \textbf{52} (2009), 583-606.

\bibitem{DeQi} Daniel Delbourgo and Qin Chao, \emph{On $\lambda$-invariants attached to cyclic cubic number fields}, LMS Journal of Comput. Math. \textbf{18} (2015), 684-698.

\bibitem{DeRi} Pierre Deligne and Kenneth Ribet, \emph{Values of abelian $L$-functions at negative integers over totally real fields}, Invent. Math. \textbf{59} (1980), 227-286.

\bibitem{EJV} Jordan Ellenberg, Sonal Jain and Akshay Venkatesh, \emph{Modelling $\lambda$-invariants by $p$-adic random matrices}, Commun. Pure Appl. Math. \textbf{64} (2011), 1243-1262.

\bibitem{Eu} Leonhard Euler, \emph{Evolutio producti infiniti $(1-x)(1-xx)(1-x^3)(1-x^4)(1-x^5)(1-x^6)$ etc. in seriem simplicem}, Acta Academiae Scientiarum Imperialis Petropolitinae  (1783), 47-55.

\bibitem{ErMe} Reijo Ernvall and Tauno Mets\"{a}nkyl\"{a}, \emph{A method for computing the Iwasawa $\lambda$-invariant}, Math. Comput. \textbf{49} (1987), 281-294.

\bibitem{Er} Reijo Ernvall, \emph{Generalized irregular primes},  Mathematika,  30 (1983), 67-73.

\bibitem{FeWa} Bruce Ferrero and Lawrence Washington, {\it The Iwasawa $\mu_p$-invariant vanishes for abelian number fields}, Annals of Math. {\bf 109} (1979), 377-395.

\bibitem{FrWa} Eduardo Friedman and Lawrence Washington, \emph{On the distribution of divisor class groups of curves over a finite field},
Th\'{e}orie des Nombres (Quebec, PQ, 1987) de Gruyter, Berlin (1989), 227-239.

\bibitem{Gar} Derek Garton, {\em Random matrices, the Cohen–Lenstra heuristics, and roots of unity}, Algebra \& Number Theory 9(1) (2015), 149-171.

\bibitem{KnWa} Heiko Knospe and Lawrence Washington, \emph{Dirichlet series expansions of $p$-adic $L$-functions}, Abhandlungen aus dem Mathematischen Seminar der Universit\"{a}t Hamburg \textbf{91} (2021), 325-334.

\bibitem{KrWa} James~S. Kraft and Lawrence~C. Washington, \emph{Heuristics for class numbers and lambda invariants}, Mathematics of Computation {\bf 76} (2007), 1005-1023.

\bibitem{KuLe} Tomio Kubota and Heinrich-Wolfgang Leopoldt, {\it Eine $p$-adische Theorie der Zetawerte, I: Einf\"{u}hrung der $p$-adischen Dirichletschen $L$-Funktionen},
J. Reine Angew. Math. {\bf 214} (1964), 328-339.

%\bibitem{MaWi} Barry Mazur and Andrew Wiles, \emph{Class fields of abelian extensions of $\mathbb{Q}$}, Invent. Math. \textbf{76} (1984), 179-330.

\bibitem{Mal} Gunter Malle, {\em On the distribution of class groups of number fields}, Experimental Mathematics 19(4) (2010), 465-474.

\bibitem{Ro} Xavier-Fran\c{c}ois Roblot, {\it Computing $p$-adic $L$-functions of totally real number fields}, Mathematics of Computation {\bf 84} (2015), 831-874.

\bibitem{Wa} Lawrence~C. Washington, \emph{Some Remarks on Cohen-Lenstra Heuristics}, Mathematics of Computation {\bf 47} (1986), 741-747.

\bibitem{Wa2} Lawrence~C. Washington, \emph{Introduction to Cyclotomic Fields}, Graduate Texts in Mathematics {\bf 83}, Second Edition (1996).

\bibitem{Wi} Andrew Wiles, \emph{The Iwasawa conjecture for totally real fields}, Annals of Mathematics \textbf{131} (1990), 493-540.

\bibitem{Zh} Luochen Zhao, {\it Sum expressions for Kubota-Leopoldt $p$-adic $L$-functions}, preprint available at {\ttfamily https://arxiv.org/abs/2201.08870}

\end{thebibliography}

%\end{document}

\newpage

%%%%%%%%%%%%%%%%%%%%%%%%%%%%%%%%%%%%%
\appendix
\section{Tables of $\lambda$-invariants for small $\chi$ and $p$.}\label{AppxA}

\noindent
We write $\zeta_m$ to denote the $m$-th root of unity $\exp\big(\frac{2\pi i} { m}\big)$.
The tables below list for primes $p=3, 5, 7$ all primitive Dirichlet characters of the form $\chi = \theta \omega^i$ satisfying:
\begin{itemize}
\item $\cond(\theta)<1000$
\item $ p \nmid \cond(\theta)$
\item $p \nmid \ord(\chi)$
\item $f<10$
\end{itemize}
such that $\lambda_p(\chi)>0$ if there is no trivial $p$-adic zero, and for $\lambda_p(\chi)>1$ if there is.
If a character is not listed then $\lambda_p(\chi)=0$ (resp. $\lambda_p(\chi)=1$ in the trivial zero case).

\smallskip
\noindent {\em N.B.} The probability of finding a primitive character $\chi = \theta \omega^i$ with residue class degree $f\geq 10$ and non-trivial $\lambda_p(\chi)$-invariant
is small according to Conjecture \ref{conj1}.\\

 %The data is available from the website
%\url{https://github.com/knospe/iwasawa}

\bigskip
\noindent {\bf I. \ $p=3$} \\

\csvreader[longtable={|p{8mm}|p{8cm}|p{9mm}|p{9mm}|p{6mm}|p{6mm}|},  separator=semicolon, column count=7, no head,
table head=\caption{Dirichlet characters $\chi = \theta \omega^i$ with nontrivial $\lambda_3(\chi)$ invariant.\label{tab:lambda3}}\\
    \toprule\bfseries $\mathbf \lambda_3(\chi)$ &Dirichlet character $\theta$ & twist $i$  & $\ord(\chi)$ & $f$ & triv. zero \\ \midrule\endfirsthead
    \toprule\bfseries  $\mathbf \lambda_3(\chi)$ &Dirichlet character $\theta$ & twist $i$  & $\ord(\chi)$ & $f$ & triv. zero \\ \midrule\endhead
    \bottomrule\endfoot
]{lambda3-1000.csv}{}
{$\csvcoli$ &  \csvcolii  $\ $ $\csvcoliii$ & $\csvcoliv$ &  $\csvcolv$ & $\csvcolvi$ & \csvcolvii}

\newpage
\noindent {\bf II. \ $p=5$} \\

\csvreader[longtable={|p{8mm}|p{8cm}|p{9mm}|p{9mm}|p{6mm}|p{6mm}|},  separator=semicolon, column count=7, no head,
table head=\caption{Dirichlet characters $\chi = \theta \omega^i$ with nontrivial $\lambda_5(\chi)$ invariant.\label{tab:lambda5}}\\
    \toprule\bfseries $\mathbf \lambda_5(\chi)$ &Dirichlet character $\theta$ & twist $i$  & $\ord(\chi)$ & $f$ & triv. zero \\ \midrule\endfirsthead
     \toprule\bfseries  $\mathbf \lambda_5(\chi)$ &Dirichlet character $\theta$ & twist $i$  & $\ord(\chi)$ & $f$ & triv. zero \\ \midrule\endhead
    \bottomrule\endfoot
]{lambda5-1000.csv}{}
{$\csvcoli$ &  \csvcolii  $\ $ $\csvcoliii$ & $\csvcoliv$ &  $\csvcolv$ & $\csvcolvi$ & \csvcolvii}

\newpage
\noindent {\bf III. \ $p=7$} \\

\csvreader[longtable={|p{8mm}|p{8cm}|p{9mm}|p{9mm}|p{6mm}|p{6mm}|},  separator=semicolon, column count=7, no head,
table head=\caption{Dirichlet characters $\chi = \theta \omega^i$ with nontrivial $\lambda_7(\chi)$ invariant.\label{tab:lambda7}}\\
    \toprule\bfseries $\mathbf \lambda_7(\chi)$ &Dirichlet character $\theta$ & twist $i$  & $\ord(\chi)$ & $f$ & triv. zero \\ \midrule\endfirsthead
     \toprule\bfseries  $\mathbf \lambda_7(\chi)$ &Dirichlet character $\theta$ & twist $i$  & $\ord(\chi)$ & $f$ & triv. zero \\ \midrule\endhead
    \bottomrule\endfoot
]{lambda7-1000.csv}{}
{$\csvcoli$ &  \csvcolii  $\ $ $\csvcoliii$ & $\csvcoliv$ &  $\csvcolv$ & $\csvcolvi$ & \csvcolvii}

\end{document}